\definecolor{yelloworange}{RGB}{255,148,0}
\theoremstyle{plain}
\newtheorem{theorem}{Theorem}[section]
\newtheorem{lemma}[theorem]{Lemma} 
\newtheorem{proposition}[theorem]{Proposition} 
\newtheorem{thmintro}{Theorem}
\crefname{thmintro}{Theorem}{Theorems}
\theoremstyle{definition}
\theoremstyle{remark}
\newtheorem{remark}[theorem]{Remark} 
\newtheorem{example}[theorem]{Example}
\newcommand{\bAlphabet}{
  \def\do##1{\expandafter\newcommand\csname b##1\endcsname{\mathbb{##1}}}
  \docsvlist{C,F,P,Q,R,Z}
}
\newcommand{\cAlphabet}{
  \def\do##1{\expandafter\newcommand\csname c##1\endcsname{\mathcal{##1}}}
  \docsvlist{A,D,O,T}
}
\newcommand{\frakAlphabet}{
  \def\do##1{\expandafter\newcommand\csname frak##1\endcsname{\mathfrak{##1}}}
  \docsvlist{m}
}
\newcommand{\id}{\mathrm{id}}
\newcommand{\Aut}{\operatorname{Aut}}
\newcommand{\GL}{\operatorname{GL}}
\newcommand{\NS}{\operatorname{NS}}
\newcommand{\M}{\operatorname{M}}
\newcommand{\PGL}{\operatorname{PGL}}
\newcommand{\SL}{\operatorname{SL}}
\newcommand{\SO}{\operatorname{SO}}
\newcommand{\Tr}{\operatorname{Tr}}
\title{Unboundedness of fixed point multiplicities on a K3 surface}
\date{August 26, 2025}
\keywords{K3 surface, automorphism, fixed point multiplicity, intersection multiplicity}
\subjclass[2020]{%
14J28, 
14J50
}
\author{Kenji Hashimoto}
\address{Kenji Hashimoto}
\email{kenji.hashimoto.math@gmail.com}
\author{Yuta Takada}
\address{Yuta Takada \newline 
\hspace*{2.7mm}
Mathematical Sciences, The University of Tokyo, 
Tokyo 153-8914, Japan; JSPS Research Fellow. \newline
\hspace*{3.5mm}
Department of Mathematics, 
Saarland University, Saarbr\"ucken 66123, Germany.}
\email{ytakada@ms.u-tokyo.ac.jp}
\begin{document}
\begin{abstract}
We exhibit automorphisms of a certain K3 surface in $\bP^1\times \bP^1 \times \bP^1$ 
with an isolated fixed point at which the induced action on the stalk of the structure sheaf 
is arbitrarily close to the identity. This implies that the multiplicities of these automorphisms 
at the fixed point can be arbitrarily large. As another application, we show that 
the intersection multiplicity of two isomorphic curves at a point can be arbitrarily large
on this K3 surface.
\end{abstract}
\maketitle

\section{Introduction}
Let $X$ be a complex manifold and $p\in X$ a point. 
Let $\gamma:X\to X$ be an automorphism that fixes the point $p$. 
The \textit{multiplicity} $\nu_p(\gamma)$ of $\gamma$ at $p$ is defined as  
\[ \nu_p(\gamma) \coloneqq \dim \left(\widehat\cO_p/ (\gamma^*_pf - f\mid f\in  \widehat\cO_p)\right) \]
where $\widehat\cO_p$ is the completion of the stalk of the structure sheaf at $p$, and 
$\gamma^*_p:\widehat\cO_p\to \widehat\cO_p$ is the induced homomorphism. 
If $(x_1, \ldots, x_d)$ is a local coordinate system centered at $p$, and $\gamma$ is expressed as 
$\gamma(x_1, \ldots, x_d) = (\gamma_1(x_1, \ldots, x_d), \ldots, \gamma_d(x_1, \ldots, x_d))$, then 
\[ \nu_p(\gamma) = \dim \left(\bC[[ x_1, \ldots, x_d]]
/ (\gamma_1(x_1, \ldots, x_d) - x_1, \ldots, \gamma_d(x_1, \ldots, x_d) - x_d) \right). \]
The multiplicity $\nu_p(\gamma)$ is finite if and only if $p$ is an isolated fixed point of $\gamma$. 
It is known that $\nu_p(\gamma^n)$ is bounded as a function of $n$ 
when $p$ is an isolated fixed point of $\gamma^n$ for every $n\geq 1$; 
see \cite{Shub-Sullivan1974}. 
This raises the question of whether $\nu_p(\gamma)$ is bounded when 
$\gamma$ ranges over automorphisms for which $p$ is an isolated fixed point. 
Let us consider the set  
\[ M(X,p)\coloneqq \{ \nu_p(\gamma) \mid \text{ $\gamma$ is an automorphism of $X$ 
for which $p$ is an isolated fixed point} \} \]
under the global assumption that $X$ is compact. 

\begin{example}\label{ex:ofgeneraltype}
If the automorphism group $\Aut(X)$ is a finite group 
(as in the case, for example, when $X$ is of general type; see \cite{Matsumura1963}) then 
$M(X,p)$ is trivially bounded for any $p$.     
\end{example}

Both projective spaces $\bP^d$ and complex tori provide examples of complex manifolds $X$ 
such that $|\Aut(X)|$ is infinite while $M(X,p)$ is bounded for any $p\in X$, 
as explained below. 

\begin{example}\label{ex:bP^d}
Let $X = \bP^d$. The automorphism group $\Aut(\bP^d)$ is naturally identified 
with the projective general linear group $\PGL_{d+1}(\bC)$. Since the action of
$\PGL_{d+1}(\bC)$ is transitive, the set $M(\bP^d, p)$ does not depend on the 
choice of the point $p$. 
Let $p$ be the point given by $[x_0:x_1:\cdots:x_d]=[1:0:\cdots:0]$, and let 
$[A]\in \PGL_{d+1}(\bC) = \Aut(\bP^d)$ be an automorphism fixing $p$, 
where $[A]$ denotes the class of a matrix $A\in \GL_{d+1}(\bC)$. 
We may assume without loss of generality that $A$ is in Jordan normal form.  
Furthermore, we may assume that the eigenvalue of the first Jordan block 
is $1$ after multiplying $A$ by a scalar. 
Suppose that $p$ is an isolated fixed point of $[A]$. 
Then, there is no Jordan block with eigenvalue $1$ except for the 
first one, and one can see that the multiplicity $\nu_p([A])$ is equal to 
the size of the first Jordan block. 
Hence, we obtain $M(\bP^d, p) = \{ 1, 2, \ldots, d+1 \}$. 
\end{example}

\begin{example}\label{ex:torus}
Let $X = \bC^d/\Lambda$ be a torus, where $\Lambda$ is the $\bZ$-span of an $\bR$-basis 
of $\bC^d$. Then, $X$ acts on itself as translations, and any 
$A\in G \coloneqq\{ A\in \GL_d(\bC)\mid A\Lambda = \Lambda \}$ 
induces an automorphism of $X$. It is known that $\Aut(X) = G\ltimes X$. 
Since $X$ acts on itself transitively, the set $M(X,p)$ does not depend on the choice of 
the point $p$. We have $M(X, 0) = \{1\}$ because if $\gamma \in \Aut(X)$ fixes the 
point $0$ then it is induced by a linear transformation of $\bC^d$. 
\end{example}

Since any compact curve i.e., one-dimensional complex manifold $X$ is either $\bP^1$, 
a torus, or a curve of general type, depending on its genus, 
\cref{ex:ofgeneraltype,ex:bP^d,ex:torus} show that the set $M(X,p)$ is bounded 
for any point $p\in X$. This can also be deduced from the Lefschetz fixed point formula. 

Does there exist an example in dimension two for which $M(X,p)$ is unbounded? 
This article gives an affirmative answer in \cref{th:mainA} by constructing a K3 surface $X$ 
with a point $p$ such that the set  $M(X,p)$ is unbounded. 
In fact, we prove a stronger statement as follows. 
Let $\frakm_p$ denote the maximal ideal of $\widehat\cO_p$. 
For an integer $n\geq 1$ and an automorphism $\gamma$ of $X$, 
we say that $\gamma^*_p:\widehat\cO_{p}\to \widehat\cO_{p}$ 
is congruent to $\id$ modulo $\mathfrak{m}_p^n$, 
written $\gamma^*_p \equiv \id \bmod \frakm_p^n$, 
if  $\gamma^*_pf \equiv f \bmod \frakm_p^n$ for any $f\in \widehat\cO_p$. 
Then, for any $n\geq 1$, there exists an automorphism $\gamma$ of $X$ such that
$p$ is an isolated fixed point of $\gamma$ and $\gamma^* \equiv  \id \bmod \frakm_p^n$. 

Our K3 surface is constructed as a smooth surface in $\bP^1\times \bP^1 \times \bP^1$. 
Let $\bP^1_x$, $\bP^1_y$, and $\bP^1_z$ denote the projective lines with homogeneous coordinates 
$(x_0: x_1)$, $(y_0: y_1)$, and $(z_0: z_1)$, respectively, and put 
$Z = \bP^1_x\times \bP^1_y \times \bP^1_z$. 
Let $X\subset Z$ be the surface defined by the tri-homogeneous polynomial 
\[ 
(x_0^2y_1^2 + x_1^2y_0^2) z_0^2
+ (x_0^2 y_0^2 + x_1^2 y_1^2)z_0 z_1 
+ (x_0^2y_1^2 + x_0x_1y_0y_1 + x_1^2y_0^2) z_1^2  
\]
of tri-degree $(2,2,2)$. One can check that $X$ is smooth by the Jacobian criterion; 
see also \cref{prop:Xissmooth}. 
Let $p\in X$ be the point given by $(x_0:x_1, y_0:y_1, z_0:z_1) = (1:0, 1:0, 1:0)$. 

\begin{thmintro}\label{th:mainA}
Let $X$ and $p$ be as above. 
For any $n\geq 1$, there exists an automorphism $\gamma$ of $X$ that fixes $p$ such that
\begin{enumerate}
\item $\gamma$ preserves no irreducible curve, which implies that
$p$ is an isolated fixed point of $\gamma$; and 
\item the induced ring homomorphism $\gamma^*_p:\widehat\cO_{p}\to \widehat\cO_{p}$ 
is congruent to $\id$ modulo $\mathfrak{m}_{p}^n$. 
\end{enumerate}
In particular, the set 
\[ M(X,p) = \{\nu_p(\gamma) \mid \text{$\gamma$ is an automorphism of $X$ for which 
$p$ is an isolated fixed point} \} \]
has no upper bound. 
\end{thmintro}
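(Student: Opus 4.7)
The plan is to exploit the three covering involutions $\iota_x, \iota_y, \iota_z \in \Aut(X)$ arising from the three double-cover projections $\pi_x, \pi_y, \pi_z : X \to \bP^1 \times \bP^1$ (forgetting one factor), together with an iterated commutator argument in the natural filtration on $\Aut(\widehat\cO_p)$. In the affine coordinates $u = x_1/x_0$, $v = y_1/y_0$, $w = z_1/z_0$, the defining polynomial becomes $(u^2 + v^2) + (1 + u^2 v^2) w + (u^2 + uv + v^2) w^2 = 0$. The $\pi_x$- and $\pi_y$-fiber equations at $p$ are $x_1^2 = 0$ and $y_1^2 = 0$ (double roots), so $\iota_x$ and $\iota_y$ fix $p$, whereas the $\pi_z$-fiber equation is $z_0 z_1 = 0$, so $\iota_z$ sends $p$ to $p' := (1{:}0, 1{:}0, 0{:}1) \neq p$; the same computation at $p'$ shows that $\iota_x$ and $\iota_y$ fix $p'$ as well. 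Using $(u,v)$ as local coordinates on $X$ at $p$ (solving $w = -(u^2 + v^2) + \text{h.o.t.}$), one checks $d\iota_x|_p = \operatorname{diag}(-1,1)$ and $d\iota_y|_p = \operatorname{diag}(1,-1)$, so $d(\iota_x\iota_y)|_p = -I$; hence $\alpha_0 := (\iota_x\iota_y)^2$ lies in $F^2 \cap \Aut(X)_p$, where $F^n := \{\gamma : \gamma^*_p \equiv \id \bmod \frakm_p^n\}$. Conjugation produces a second element $\beta_0 := \iota_z \alpha_0 \iota_z \in F^2 \cap \Aut(X)_p$ (the chain rule together with $\iota_z^2 = \id$ forces $d\beta_0|_p = I$), and one expects $\alpha_0$ and $\beta_0$ to be non-commuting.

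The next step invokes the standard commutator estimate $[F^k, F^l] \subset F^{k+l-1}$: writing $\alpha(x_i) = x_i + a_i$ and $\beta(x_i) = x_i + b_i$ with $a_i \in \frakm_p^k$, $b_i \in \frakm_p^l$, the Taylor expansion
\begin{equation*}
\alpha\beta(x_i) - \beta\alpha(x_i) = \sum_j \bigl(\partial_j a_i \cdot b_j - \partial_j b_i \cdot a_j\bigr) + O(\frakm_p^{k+l}) \in \frakm_p^{k+l-1}
\end{equation*}
gives $[\alpha, \beta] \in F^{k+l-1}$. Iterating from $\alpha_0, \beta_0 \in F^2$ by setting $\delta_1 := [\alpha_0, \beta_0] \in F^3$ and $\delta_k := [\alpha_0, \delta_{k-1}] \in F^{k+2}$ produces, for each $n \geq 2$, a candidate $\gamma \in F^n \cap \Aut(X)_p$ that realises condition (ii). Then $\gamma^*_p f - f \in \frakm_p^n$ for every $f \in \widehat\cO_p$, so
\begin{equation*}
\nu_p(\gamma) \geq \dim_\bC\!\bigl(\widehat\cO_p/\frakm_p^n\bigr) = \tbinom{n+1}{2} \xrightarrow{n \to \infty} \infty,
\end{equation*}
which yields the unboundedness of $M(X,p)$.

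It remains to verify condition (i): that $\gamma$ preserves no irreducible curve, which in particular makes $p$ isolated in $\Fix(\gamma)$. For this I would analyse the action on $\NS(X)$: the classes $h_x, h_y, h_z$ of the pullbacks of $\cO_{\bP^1}(1)$ from the three factors span a rank-$3$ sublattice on which $\iota_x, \iota_y, \iota_z$ act by explicit reflections, and by the standard Wehler--Cantat-type analysis of K3 surfaces in $\bP^1 \times \bP^1 \times \bP^1$ these three involutions generate a subgroup of $\Aut(X)$ isomorphic to the free product $\bZ/2 \ast \bZ/2 \ast \bZ/2$ acting faithfully on $\NS(X)$. Any nontrivial reduced word of length $\geq 2$ then acts hyperbolically on $\NS(X)$, and an invariant irreducible curve would produce a $\gamma$-fixed class in the interior of the effective cone, incompatible with hyperbolicity.

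The main obstacle is this last step: one must check both that each iterated commutator $\delta_k$ reduces to a nontrivial word in the free product (so that the faithfulness-and-hyperbolicity argument applies) and that it remains hyperbolic on $\NS(X)$, since commutators of hyperbolic isometries need not themselves be hyperbolic. Carrying this out for this specific tri-degree-$(2,2,2)$ equation---verifying faithfulness of the action on $\NS(X)$ and hyperbolicity of each $\delta_k$---is where the bulk of the paper's technical work presumably lies.
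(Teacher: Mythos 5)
Your overall strategy coincides with the paper's: use the three covering involutions, push an element deep into the filtration $F^n$ by iterated commutators (your estimate $[F^k,F^l]\subset F^{k+l-1}$ is the paper's Lemma 2.1 in the special case $k=2$), and control invariant curves through the action on $\NS(X)$, where $\langle \iota_x,\iota_y,\iota_z\rangle$ is a free product of three copies of $\bZ/2\bZ$. Part (ii) and the multiplicity bound $\nu_p(\gamma)\geq \binom{n+1}{2}$ are essentially correct as you present them. One small error: your claim that $d\beta_0|_p = I$ is false. Since $\iota_z(p)=q:=(1{:}0,1{:}0,0{:}1)\neq p$, the chain rule gives that $d\beta_0|_p$ is \emph{conjugate to} $d\alpha_0|_q$, and at $q$ the relevant coefficient of the defining equation is $b=1$ rather than $a=0$, where $(\iota_x\circ\iota_y)^*$ has order $3$ on $\frakm_q/\frakm_q^2$; so $d\beta_0|_p\neq I$. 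This is harmless --- the commutator lemma only needs the second entry to lie in $F^1$, which holds automatically, so your iteration still works with indices shifted by one --- but the stated reason is wrong.

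The genuine gap is condition (i), which you yourself flag as the main obstacle; your sketched route for it would fail on two counts. First, it is not true that every nontrivial reduced word of length $\geq 2$ acts hyperbolically: $\iota_x\circ\iota_y$ itself preserves the elliptic fibration $\pi_z$ and acts on $M=\bZ h_x\oplus\bZ h_y\oplus\bZ h_z$ with all eigenvalues equal to $1$ (parabolic). Second, and more seriously, even for a hyperbolic $\gamma$ an invariant irreducible curve does \emph{not} produce a fixed class in the interior of the effective cone: the fixed part of $H^{1,1}_\bR$ orthogonal to the expanding/contracting plane is negative definite, so an invariant irreducible curve is forced to be a $(-2)$-curve, whose class sits on the boundary of the effective cone. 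Hyperbolic automorphisms of K3 surfaces routinely preserve $(-2)$-curves, so "incompatible with hyperbolicity" is not a valid conclusion. The paper closes both holes quantitatively: it takes $\phi=(\iota_x\circ\iota_y)^N$ with $N>2|\det(M^\perp)|$, transports the action on $M$ to $\rho(\SL_2(\bZ))\subset\SO^+(1,2)$, and computes that each iterated commutator corresponds to a matrix of trace $2+N^2s^2c^2>2$ (hence is hyperbolic, fixing your first issue) whose primitive $1$-eigenvector $u\in M$ satisfies $-(u,u)\geq N^2s^2>4\det(M^\perp)^2$. A hypothetical invariant $(-2)$-class $e=\alpha u+v$ with $v\in\bQ M^\perp$ then yields $-2\det(M^\perp)^2\leq(u,u)$ via the lattice lemma $\det(M^\perp)\alpha\in\bZ$ and the fact that $M^\perp$ contains no effective classes, contradicting the eigenvector bound. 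Your choice $\alpha_0=(\iota_x\circ\iota_y)^2$ omits the large power $N$, without which this exclusion of $(-2)$-curves does not go through.
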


In fact, we can obtain a one-parameter family of K3 surfaces whose 
generic member satisfies \cref{th:mainA}; see \cref{rmk:parameter}. 
The construction of $\gamma$ in \cref{th:mainA} is as follows. 
The restriction of the projection $Z \to \bP_y\times \bP_z$ to $X$ is a double cover, 
which defines an involution $\iota_x:X\to X$. 
Similarly, other two involutions $\iota_y$ and $\iota_z$ are defined. 
Let $\phi$ be a suitable power of $\iota_x\circ\iota_y$, and let 
$\psi = \iota_z\circ\iota_x\circ\iota_y\circ \iota_z$.  
These two automorphisms fix the point $p$. 
We recursively define automorphisms $\gamma_i$ of $X$ by 
$\gamma_0 \coloneqq \psi$ and $\gamma_{i} \coloneqq [\phi, \gamma_{i-1}]$ ($i\geq 1$), where 
$[\phi, \gamma_{i-1}] = \phi^{-1}\circ \gamma_{i-1}^{-1}\circ \phi\circ \gamma_{i-1}$ is 
the commutator. Then, $\gamma_n$ will be the desired automorphism.  
The defining polynomial of $X$ was chosen so that this construction works well. 

\cref{th:mainA} yields another consequence. For two curves $C$ and $D$ on a surface $X$ 
through a point $p$, let $\mu_p(C\cdot D)$ denote the \textit{intersection multiplicity} 
of $C$ and $D$ at $p$: 
\[ \mu_p(C\cdot D) \coloneqq \dim \left(\widehat\cO_p/(I + J) \right) \]
where $I$ and $J$ are the ideals defining the germs of $C$ and $D$ at $p$, respectively. 

\begin{thmintro}\label{th:mainB}
Let $X$ and $p$ be as in \cref{th:mainA}. Let $C\subset X$ be a nonsingular irreducible 
curve through $p$. For any $n\geq 1$, there exists a curve $C'$ that is isomorphic to but 
distinct from $C$ such that $\mu_p(C\cdot C') \geq n$. 
\end{thmintro}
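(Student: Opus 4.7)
The plan is to take $C'\coloneqq \gamma(C)$, where $\gamma$ is the automorphism of $X$ provided by \cref{th:mainA} for the given $n$. Since $\gamma$ is an automorphism of $X$, the restriction $\gamma|_C\colon C\to C'$ is an isomorphism; and $C'\neq C$ because $\gamma$ preserves no irreducible curve by property (i). It remains to prove $\mu_p(C\cdot C')\geq n$, and this is where property (ii), $\gamma^*_p\equiv\id\bmod\frakm_p^n$, will be used.

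Since $C$ is smooth at $p$, I choose a local equation $f\in\frakm_p\setminus\frakm_p^2$ of $C$ and extend it to formal coordinates at $p$ so that $\widehat\cO_p\cong\bC[[f,v]]$. The germ of $C'$ at $p$ is then cut out by $g\coloneqq(\gamma^{-1})^*_p f$. A brief formal computation upgrades the hypothesis $\gamma^*_p\equiv\id\bmod\frakm_p^n$ to the same congruence for $(\gamma^{-1})^*_p$: writing $\gamma^*_p(h)=h+r(h)$ with $r(h)\in\frakm_p^n$ and applying this identity to $h=(\gamma^{-1})^*_p(h')$ gives $(\gamma^{-1})^*_p(h')-h'\in\frakm_p^n$. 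Hence $g=f+e$ for some $e\in\frakm_p^n$, and
\[
\mu_p(C\cdot C')=\dim_\bC\widehat\cO_p/(f,g)=\dim_\bC\widehat\cO_p/(f,e)=\dim_\bC\bC[[v]]/(\bar e),
\]
where $\bar e\in\bC[[v]]$ is the image of $e$ modulo $(f)$. Because $e\in\frakm_p^n=(f,v)^n$, its image $\bar e$ lies in $(v)^n$.

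The main obstacle is to ensure that $\bar e\neq 0$, since otherwise $\bC[[v]]/(\bar e)$ is infinite-dimensional and the dimension estimate collapses. If $\bar e=0$, then $f$ divides $e$ in $\widehat\cO_p$, so the formal ideals $(f)$ and $(f+e)$ coincide and the formal germs of $C$ and $C'$ at $p$ are equal; but two distinct irreducible curves on a smooth surface must have different formal germs at any common point (by faithful flatness of the completion $\widehat\cO_p$ over $\cO_p$), contradicting $C\neq C'$. Therefore $\bar e$ is a nonzero element of $(v)^n\subset \bC[[v]]$, so $\dim_\bC\bC[[v]]/(\bar e)$ equals the $v$-adic order of $\bar e$, which is at least $n$. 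This yields $\mu_p(C\cdot C')\geq n$ and completes the plan.
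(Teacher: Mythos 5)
Your proof is correct and takes essentially the same route as the paper's: choose formal coordinates at $p$ adapted to the smooth curve $C$, use $\gamma^*_p \equiv \id \bmod \frakm_p^n$ to see that the local equation of $C'$ differs from that of $C$ by an element of $\frakm_p^n$, and read off the intersection multiplicity in one variable. The only differences are minor: you take $C' = \gamma(C)$ rather than $\gamma^{-1}(C)$ (the introduction mentions both choices), which costs you the short extra check that $(\gamma^{-1})^*_p \equiv \id \bmod \frakm_p^n$, and you add the (correct, but not strictly necessary) verification that $\bar e \neq 0$ --- the paper omits this since the conclusion $\mu_p(C\cdot C') \geq n$ would hold even if the quotient were infinite-dimensional.
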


\cref{th:mainB} follows from \cref{th:mainA} by setting $C' = \gamma(C)$ or $\gamma^{-1}(C)$. 
However, we remark that the statement of \cref{th:mainB} is independent of automorphisms. 
An example of nonsingular irreducible curves through $p$ is the ramification curve of 
the double cover $X\to \bP^1_y\times \bP^1_z$, that is, the fixed point set of $\iota_x$. 
Another example is given as follows. 
The restriction of the projection $Z\to \bP^1_z$ to $X$ defines an elliptic fibration. 
The fiber of this fibration over $(z_0: z_1) = (1:0)$, 
\[ \{ (x_0:x_1, y_0:y_1, 1:0)\in \bP^1_x\times \bP^1_y\times \bP^1_z  
\mid x_0^2y_1^2 + x_1^2y_0^2 = 0 \}\subset X, \]
consists of two $(-2)$-curves. Each of them is a nonsingular irreducible curve through $p$. 

The organization of this article is as follows. In \S\ref{ss:commandmulti}, 
we provide some local arguments explaining why taking commutators yields
the desired automorphism in \cref{th:mainA}. 
In the rest of \cref{sec:pleliminaries}, we collect several lemmata and 
propositions that we will need later. 
In \cref{sec:K3}, after briefly recalling definitions and properties 
of K3 surfaces, we study K3 surfaces in 
$\bP^1\times \bP^1\times \bP^1$. 
\cref{sec:mainresults} is devoted to the proofs of 
\cref{th:mainA,th:mainB}.

\subsection*{Acknowledgments.}
The second author is supported by JSPS KAKENHI Grant Number JP24KJ0044. 

\section{Preliminaries}\label{sec:pleliminaries}
\subsection{Commutator and multiplicity}\label{ss:commandmulti}
The fixed point multiplicity is a local quantity, and in this subsection we focus on 
local arguments. Although similar discussions can be carried out in higher dimensions, 
we restrict ourselves to the two-dimensional case. 

Let $\cO_0$ and $\widehat\cO_0$ denote the local ring of holomorphic functions 
defined near $0\in \bC^2$ and its completion, respectively. 
The symbol $\frakm$ denotes the maximal ideal of $\widehat\cO_0$. 
Note that $\widehat\cO_0$ is isomorphic to the ring $\bC[[x,y]]$ of formal power series, and 
under this isomorphism, we have 
$\frakm^k = (x^iy^j\mid i, j \in \bZ_{\geq0}, i + j = k)$ for any $k\in \bZ_{\geq0}$. 

Let $G$ be the set of germs of biholomorphic mappings from neighborhoods of $0$ 
in $\bC^2$ to $\bC^2$ that send $0$ to $0$. The composition of mappings makes $G$ a group. 
Every $\gamma\in G$ induces a ring homomorphism $\gamma^*:\widehat\cO_0\to \widehat\cO_0$ 
naturally. As mentioned in Introduction, the multiplicity $\nu_0(\gamma)$ of $\gamma$ 
at $0$ is defined as 
\[ \nu_0(\gamma) \coloneqq \dim \left(\widehat\cO_0/ (\gamma^*f - f\mid f\in  \widehat\cO_0)\right).\]
The multiplicity $\nu_0(\gamma)$ is finite if and only if $0$ is an isolated fixed point of $\gamma$. 
For $\gamma\in G$ and an ideal $I \subset \widehat\cO_0$, we write $\gamma^* \equiv \id \bmod I$ if 
$\gamma^*f \equiv f \bmod I$ for any $f\in \widehat\cO_0$.  
For $\phi, \psi \in G$, the commutator $\phi^{-1}\circ\psi^{-1}\circ\phi\circ\psi$ 
is denoted by $[\phi, \psi]$.

\begin{lemma}\label{lem:commutatormodI}
Let $\phi, \psi \in G$ and $k\in \bZ_{\geq1}$. 
Assume that $\phi^* \equiv \id \bmod \frakm^2$ and $\psi^* \equiv \id \bmod \frakm^k$. 
Then, we have $[\phi, \psi]^* \equiv \id \bmod \frakm^{k+1}$.
\end{lemma}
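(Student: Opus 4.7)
The plan is to work entirely algebraically on the level of the induced ring endomorphisms $\Phi = \phi^*$ and $\Psi = \psi^*$ of $\widehat\cO_0$. Using contravariance of pull-back under composition, I would first rewrite $[\phi,\psi]^{*} = \Psi\circ\Phi\circ\Psi^{-1}\circ\Phi^{-1}$. Setting $A = \Phi - \id$ and $B = \Psi - \id$, the hypotheses translate into $A(\widehat\cO_0)\subseteq \frakm^2$ and $B(\widehat\cO_0)\subseteq \frakm^k$.

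The key intermediate step is to upgrade these global bounds to the filtered statements $A(\frakm^j)\subseteq \frakm^{j+1}$ and $B(\frakm^j)\subseteq \frakm^{j+k-1}$ for every $j\geq 1$. I would establish these by writing an arbitrary element of $\frakm^j$ as a sum of products $f_1\cdots f_j$ with each $f_i\in \frakm$, and using that $\Phi$ is a ring homomorphism to expand $\Phi(f_1\cdots f_j) = \prod_{i=1}^{j}(f_i + Af_i)$. Every term other than $\prod f_i$ itself contains at least one factor $Af_i\in \frakm^2$, with the remaining $j-1$ factors lying in $\frakm$, so it lies in $\frakm^{j+1}$; an identical argument works for $B$. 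This filtered bound is the main technical point, since it converts the a priori information on all of $\widehat\cO_0$ into sharper information at every layer of the $\frakm$-adic filtration, and tracking the exponents correctly is where one must be careful.

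Combining these two estimates, both $AB$ and $BA$ send $\widehat\cO_0$ into $\frakm^{k+1}$: for example, $B(\widehat\cO_0)\subseteq \frakm^k$ and then $A(\frakm^k)\subseteq \frakm^{k+1}$, with the symmetric argument for $BA$. A direct expansion now yields $\Phi\Psi - \Psi\Phi = (\id + A)(\id + B) - (\id + B)(\id + A) = AB - BA$, so I can write $\Psi\Phi = \Phi\Psi + E$ with $E(\widehat\cO_0)\subseteq \frakm^{k+1}$. Composing on the right with $\Psi^{-1}\circ\Phi^{-1}$ gives $[\phi,\psi]^{*} = \id + E\circ\Psi^{-1}\circ\Phi^{-1}$, and since $E$ already maps all of $\widehat\cO_0$ into $\frakm^{k+1}$, the composite $E\circ\Psi^{-1}\circ\Phi^{-1}$ still takes values in $\frakm^{k+1}$, yielding $[\phi,\psi]^{*}\equiv \id \bmod \frakm^{k+1}$. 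Once the filtered-bound step is in place, everything else reduces to the single commutator identity $\Phi\Psi - \Psi\Phi = AB - BA$ and a bookkeeping composition.
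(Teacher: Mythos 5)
Your proof is correct, but it takes a genuinely different route from the paper's. The paper argues in coordinates: it writes $\phi$ and $\psi$ as $(x+\phi_1',y+\phi_2')$ and $(x+\psi_1',y+\psi_2')$, substitutes to show $(\phi\circ\psi)^*$ and $(\psi\circ\phi)^*$ agree on $x$ and $y$ modulo $\frakm^{k+1}$, and then divides; it also treats $k=1$ as a separate case. You instead work coordinate-freely with the additive operators $A=\Phi-\id$ and $B=\Psi-\id$ on $\widehat\cO_0$, prove the filtered estimates $A(\frakm^j)\subseteq\frakm^{j+1}$ and $B(\frakm^j)\subseteq\frakm^{j+k-1}$ by expanding $\Phi(f_1\cdots f_j)=\prod(f_i+Af_i)$ over a generating set of $j$-fold products (which is legitimate, since $\frakm^j$ is by definition additively generated by such products and $A$ is additive), and then reduce everything to the identity $\Phi\Psi-\Psi\Phi=AB-BA$ together with the fact that precomposition distributes over pointwise sums. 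The exponent bookkeeping checks out: $AB$ lands in $A(\frakm^k)\subseteq\frakm^{k+1}$ and $BA$ in $B(\frakm^2)\subseteq\frakm^{k+1}$. What your approach buys is uniformity and generality: it handles $k=1$ and $k\geq 2$ by the same argument, makes no use of the two-dimensionality or of a choice of coordinates, and so proves the lemma verbatim in any dimension (the paper explicitly restricts to dimension two for convenience). The paper's computation is more elementary and makes the mechanism visible at the level of power series, at the cost of a case split and a coordinate-dependent presentation.
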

\begin{proof}
If $k = 1$, 
since $\phi^*\equiv \id \bmod \frakm^2$, we have 
\[ 
[\phi, \psi]^* = (\phi^{-1}\circ\psi^{-1}\circ\phi\circ\psi)^* 
\equiv (\psi^{-1}\circ\psi)^* \equiv \id \mod \frakm^2. 
\]
Suppose that $k \geq 2$. Then $\phi$ and $\psi$ can be written as 
\[\begin{split}
\phi(x, y) 
&= \left(x + \phi'_1(x, y), y + \phi'_2(x, y)\right),\\ 
\psi(x, y) 
&= \left(x + \psi'_1(x, y), y + \psi'_2(x, y)\right), 
\end{split} 
\]
where each $\phi_i'(x,y)$ belongs to $\frakm^{2}$, while each $\psi_i'(x, y)$
belongs to $\frakm^k$. We have 
\[\begin{split}
(\phi\circ\psi)^*x
&= x + \psi_1'(x,y) + \phi_1'(x + \psi'_1(x, y), y + \psi'_2(x, y))\\
&\equiv x + \psi_1'(x,y) + \phi_1'(x, y) \mod \frakm^{k+1}
\end{split}
\]
and similarly, 
\[
(\phi\circ\psi)^*y
\equiv y + \psi_2'(x,y) + \phi_2'(x, y) \mod \frakm^{k+1}.  
\]
A similar calculation yields 
\[\begin{split}
(\psi\circ\phi)^*x
&\equiv x + \phi_1'(x,y) + \psi_1'(x, y)
\equiv (\phi\circ\psi)^*x \mod \frakm^{k+1}, \\
(\psi\circ\phi)^*y
&\equiv y + \phi_2'(x,y) + \psi_2'(x, y)
\equiv (\phi\circ\psi)^*y \mod \frakm^{k+1}. \\
\end{split}\]
These congruences show that 
\[ (\phi^{-1}\circ\psi^{-1}\circ\phi\circ\psi)^* 
= (\phi\circ\psi)^* \circ ((\psi\circ\phi)^*)^{-1}
\equiv \id \mod \frakm^{k+1},  
\]
and the proof is complete. 
\end{proof}

\begin{remark}
Let $G_k := \{ \gamma \in G \bigm| \gamma^* \equiv \id \bmod \frakm^{k+1} \}$.
\Cref{lem:commutatormodI} means that $[G_1, G_k] \leq G_{k+1}$ for any $k\geq 1$. 
In other words, the sequence 
\[    G_1 \rhd G_2 \rhd G_3 \rhd \cdots \]
is a central series of $G_1$ of infinite length. 
\end{remark}

As in \cref{lem:commutatormodI}, let $\phi$ and $\psi \in G$ be germs such that 
$\phi^* \equiv \id \mod \frakm^2$ and $\psi^* \equiv \id \mod \frakm^k$ for some $k\in \bZ_{\geq1}$. 
Let $\Gamma$ be the subgroup of $G$ generated by $\phi$ and $\psi$. 
We recursively define biholomorphic mapping germs $\gamma_i\in \Gamma$ ($i\geq 0$) by 
\[
\gamma_0 \coloneqq \psi, \quad 
\gamma_{i} \coloneqq [\phi, \gamma_{i-1}] \quad(i\geq 1). 
\] 

\begin{proposition}\label{prop:idmodm_n}
Suppose that $\phi$ and $\psi$ satisfy no nontrivial group-theoretic relation,
i.e., $\Gamma$ is a free group of rank $2$. 
Then, for any integer $n\geq k$, we have $\gamma_{n-k} \neq \id$ and 
$\gamma_{n-k}^* \equiv \id \bmod \frakm^{n}$. 
In this case, we have $\nu_0(\gamma_{n-k})\geq n(n+1)/2$ (possibly infinite). 
\end{proposition}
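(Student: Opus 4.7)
The plan is to handle the three assertions of the proposition in sequence: the congruence $\gamma_{n-k}^* \equiv \id \bmod \frakm^n$, the non-triviality $\gamma_{n-k} \neq \id$, and the multiplicity bound $\nu_0(\gamma_{n-k}) \geq n(n+1)/2$. For the congruence, I would argue by induction on $i \geq 0$ that $\gamma_i^* \equiv \id \bmod \frakm^{k+i}$. The base case $i = 0$ is the hypothesis on $\psi$, and the inductive step is an immediate application of \cref{lem:commutatormodI} to $\phi$ (which satisfies $\phi^* \equiv \id \bmod \frakm^2$) and $\gamma_{i-1}$ (which satisfies $\gamma_{i-1}^* \equiv \id \bmod \frakm^{k+i-1}$ by induction), with the exponent ``$k$'' in that lemma replaced by $k+i-1$. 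Specializing to $i = n-k$ produces the desired congruence.

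For non-triviality, since $\Gamma$ is free of rank $2$ on $\phi, \psi$, it suffices to verify that the iterated commutator word $\gamma_{n-k} = [\phi, [\phi, \cdots [\phi, \psi] \cdots ]]$, with $n - k$ nested brackets, is nontrivial in the free group $F_2$. This is a classical consequence of the Magnus--Witt theorem: the associated graded Lie ring of $F_2$ with respect to its lower central series is the free Lie ring on two generators, and the iterated bracket $\mathrm{ad}(\phi)^{n-k}(\psi)$ is a nonzero element there (it occurs in any standard Hall basis). Consequently, $\gamma_{n-k}$ lies in the $(n-k+1)$-st term of the lower central series of $F_2$ but not in the $(n-k+2)$-nd, and in particular is nontrivial in $\Gamma$.

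The multiplicity bound then follows formally: the congruence $\gamma_{n-k}^* \equiv \id \bmod \frakm^n$ means that the ideal $J \coloneqq (\gamma_{n-k}^* f - f \mid f \in \widehat\cO_0)$ is contained in $\frakm^n$, so the natural surjection $\widehat\cO_0/J \twoheadrightarrow \widehat\cO_0/\frakm^n$ yields
\[
\nu_0(\gamma_{n-k}) \;=\; \dim_{\bC} \widehat\cO_0/J \;\geq\; \dim_{\bC} \widehat\cO_0/\frakm^n \;=\; \tfrac{n(n+1)}{2},
\]
the last equality using that $\widehat\cO_0 \cong \bC[[x, y]]$ has $n(n+1)/2$ monomials of total degree less than $n$. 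The step with the most substance is the non-triviality; once the freeness of $\Gamma$ is granted by assumption, it reduces to a standard calculation in $F_2$, while the congruence and the dimension bound are essentially formal.
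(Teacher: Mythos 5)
Your proof is correct and follows essentially the same route as the paper: iterate \cref{lem:commutatormodI} to get the congruence, then bound $\nu_0$ by the codimension of $\frakm^n$ in $\bC[[x,y]]$. The only difference is that you justify $\gamma_{n-k}\neq\id$ in detail via the Magnus--Witt theorem on the lower central series of $F_2$, whereas the paper simply asserts it ``by the assumption''; your added detail is accurate and harmless.
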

\begin{proof}
Note that $\gamma_i \neq \id$ for any $i\geq 0$ by the assumption. 
Let $n\geq k$. We have $\gamma_0^* = \psi^* \equiv \id \bmod \frakm^k$ and get 
\[\begin{split}
\gamma_1^* &\equiv \id \mod \frakm^{k+1}, \\
\gamma_2^* &\equiv \id \mod \frakm^{k+2}, \ldots, \\
\gamma_{n-k}^* &\equiv \id \mod \frakm^{n}
\end{split} \]
by applying \cref{lem:commutatormodI} repeatedly. 
In this case, we have $(\gamma_{n-k}^*x - x, \gamma_{n-k}^*y - y) \subset \frakm^n$, 
and thus,  
\[\nu_0(\gamma_{n-k}) 
= \dim \bC[[x,y]]/(\gamma_{n-k}^*x - x, \gamma_{n-k}^*y - y) 
\geq \dim \bC[[x,y]]/\frakm^n
= n(n+1)/2. 
\]
\end{proof}

\subsection{Lattices}
Here, we recall definitions and properties of lattices. 
A \textit{lattice} is a finitely generated free $\bZ$-module $L$ equipped with an inner product, 
that is, a nondegenerate symmetric bilinear form $(\;,\;): L\times L \to \bZ$. 
Let $L = (L, (\;,\;))$ be a lattice of rank $r$, and let 
$e_1, \ldots, e_r$ be a basis of $L$. 
The $r\times r$ matrix $((e_i, e_j))_{ij}$ is called the \textit{Gram matrix} of $L$ 
with respect to the basis $e_1, \ldots, e_r$. 
The determinant $\det ((e_i, e_j))_{ij}\in \bZ\setminus\{0\}$ does not depend on 
the choice of the basis. This integer is called the \textit{determinant} of $L$ and 
is denoted by $\det L$. 
The dual of $L$ is defined to be 
$L^\vee \coloneqq \{ w \in L\otimes\bQ \mid (v,w)\in \bZ \text{ for any $v\in L$} \}$. 
We have $L \subset L^\vee$, and the index $[L^\vee: L]$ is equal to $|\det L|$. 
We say that $L$ is \textit{even} if $(v,v)\in 2\bZ$ for any $v\in L$, and 
\textit{unimodular} if $|\det L| = 1$. 

A \textit{sublattice} of $L$ is a submodule on which the inner product restricts 
to a nondegenerate form. 
Let $M$ be a sublattice of $L$. We define its \textit{orthogonal} by 
$M^\perp \coloneqq \{ w\in L \mid (v,w)=0 \text{ for any $v\in M$} \}$. 
Note that the direct sum $M \oplus M^\perp$ may not coincide with $L$ itself, 
but it is contained in $L$ with finite index. 
Thus, any element $v\in L$ can be uniquely written as 
$v = w + w'$ for some $w\in \bQ M$ and $w'\in \bQ M^\perp$.  

\begin{lemma}\label{lem:detM2-multiple}
Let $M_1$ and $M_2$ be sublattices of a lattice $L$, and assume that 
they are orthogonal to each other and that $M_1\oplus M_2$ is contained in $L$ with 
finite index (e.g. $M_2 = M_1^\perp$). 
Let $v\in L$, and write $v= w_1 + w_2$, where $w_1\in \bQ M_1$ and $w_2 \in \bQ M_2$. 
Then $\det(M_2) w_2 \in M_2$ and $\det(M_2) w_1 \in L$. 
\end{lemma}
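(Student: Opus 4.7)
The plan is to observe that the hypothesis on $v$ forces $w_2$ to lie in the dual lattice $M_2^\vee$, and then invoke the standard fact that $[M_2^\vee : M_2] = |\det M_2|$ to clear denominators. Note that although the lemma does not require $M_1 \oplus M_2$ to equal $L$, the decomposition $v = w_1 + w_2$ with $w_1 \in \bQ M_1$, $w_2 \in \bQ M_2$ is well-defined and unique because $M_1 \oplus M_2$ has finite index in $L$, so $L \otimes \bQ = \bQ M_1 \oplus \bQ M_2$ as an orthogonal direct sum.

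First I would show that $w_2 \in M_2^\vee$. For any $f \in M_2$, orthogonality of $M_1$ and $M_2$ gives $(w_1, f) = 0$, hence
\[ (w_2, f) = (w_1 + w_2, f) = (v, f). \]
Since $v \in L$ and $f \in M_2 \subset L$, the right-hand side is an integer. Thus $(w_2, f) \in \bZ$ for every $f \in M_2$, which is precisely the condition $w_2 \in M_2^\vee$.

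Next I would use the index formula $[M_2^\vee : M_2] = |\det M_2|$, recalled in the paragraph preceding the lemma. Since the order of the finite abelian group $M_2^\vee / M_2$ is $|\det M_2|$, multiplication by $|\det M_2|$, and hence by $\det M_2$ itself, sends $M_2^\vee$ into $M_2$. Therefore $\det(M_2)\, w_2 \in M_2$, which is the first claim.

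For the second claim, I would simply write
\[ \det(M_2)\, w_1 = \det(M_2)\, v - \det(M_2)\, w_2. \]
The first term lies in $L$ because $v \in L$, and the second lies in $M_2 \subset L$ by what was just proved. Hence $\det(M_2)\, w_1 \in L$. There is no real obstacle here; the whole argument reduces to the orthogonality identity $(v,f) = (w_2,f)$ and the standard index computation for the dual lattice.
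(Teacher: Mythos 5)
Your proof is correct and follows essentially the same route as the paper's: both establish $w_2 \in M_2^\vee$ (the paper via the inclusion chain $L \subset L^\vee \subset M_1^\vee \oplus M_2^\vee$, you by the equivalent direct computation $(w_2,f)=(v,f)\in\bZ$), then apply $[M_2^\vee:M_2]=|\det M_2|$ and subtract to get the claim for $w_1$. No gaps.
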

\begin{proof}
We have the inclusion 
\[ M_1\oplus M_2 \subset L \subset L^\vee \subset M_1^\vee \oplus M_2^\vee \]
in $L\otimes\bQ$. In particular, $w_2\in M_2^\vee$. This shows that 
$\det(M_2) w_2 = \pm [M_2^\vee:M_2]w_2 \in M_2$. 
Furthermore, we have $\det(M_2) w_1 = \det(M_2) v - \det(M_2) w_2 \in L$. 
\end{proof}

\subsection{Homomorphism $\SL_2(\bR)\to \SO^+(1,2)$}\label{ss:SL_2->S0^+(V)}
For the proof of \cref{th:mainA}, we will make use of a homomorphism $\SL_2(\bR)\to \SO^+(1,2)$
which is well-known in the theory of Lie groups. 

Let $V$ be the $3$-dimensional $\bR$-vector space defined by 
$V = \{ v \in \M_2(\bR) \mid \Tr(v) = 0 \}$, where 
$\M_2(\bR)$ is the set of real $2\times 2$ matrices. Set 
\[ e_1 = \begin{pmatrix}
0 & -1 \\
0 & 0
\end{pmatrix}, \,
e_2 = \begin{pmatrix}
1 & 0 \\
0 & -1
\end{pmatrix}, \,
e_3 = \begin{pmatrix}
0 & 0 \\
1 & 0
\end{pmatrix}. 
\]
Then $e_1, e_2, e_3$ form a basis of $V$. 
We define a symmetric bilinear form $(\;, \;):V\times V \to \bR$ by 
\[ (v,w) = -2\Tr(vw) \quad (v,w\in V). \]
One can check that $(v,v) = 4 \det(v)$ for any $v\in V$. 
The Gram matrix with respect to the basis $e_1, e_2, e_3$ is given by 
\[ \begin{pmatrix}
0 & 0 & 2 \\
0 & -4 & 0 \\
2 & 0 & 0
\end{pmatrix}, 
\]
which implies that this symmetric bilinear form is nondegenerate and 
has signature $(1,2)$. 

Let $\rho:\SL_2(\bR)\to \GL(V)$ be the representation of $\SL_2(\bR)$ on $V$ 
defined by $\rho(A)v = AvA^{-1}$ ($A\in \SL_2(\bR)$, $v\in V$). 
Each $A\in\SL_2(\bR)$ acts isometrically since 
\[ (\rho(A)v, \rho(A)w) = -2\Tr(AvA^{-1}AwA^{-1}) = -2\Tr(vw) = (v,w). \]
Moreover, for $A = \begin{pmatrix}
a & b \\ 
c & d
\end{pmatrix}
\in \SL_2(\bR)$, 
the matrix expression of $\rho(A)$ with respect to the basis $e_1, e_2, e_3$
is given by 
\begin{equation}\label{eq:rho(A)}
\begin{pmatrix}
a^2 & 2ab & b^2 \\
ac & ad + bc & bd \\
c^2 & 2cd & d^2
\end{pmatrix}.
\end{equation} 
In particular, we have $\det(\rho(A)) = 1$, and 
the image of $\rho:\SL_2(\bR)\to \GL(V)$ is contained in $\SO(V)$. By \eqref{eq:rho(A)}, 
it also follows that $\ker(\rho) = \{I_2, -I_2\} \subset \SL_2(\bR)$, 
where $I_n$ denotes the $n\times n$ identity matrix. 
Let $\SO^+(V)$ denote the connected component of $\SO(V)$ containing $I_3$. 
This component $\SO^+(V)$ consists of the isometries in $\SO(V)$ that 
preserve a connected component of the subset $\{ v \in V \mid (v,v) = 1 \} \subset V$. 

\begin{proposition}\label{prop:imageofrho}
The image of $\rho:\SL_2(\bR)\to \GL(V)$ is equal to $\SO^+(V)$. 
\end{proposition}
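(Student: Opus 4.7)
The plan is to deduce this from three standard facts: path-connectedness of $\SL_2(\bR)$, a dimension count that makes $\rho$ locally surjective, and the observation that any open subgroup of a connected topological group is the whole group. Two of the three ingredients, namely $\ker(\rho) = \{\pm I_2\}$ and $\rho(\SL_2(\bR)) \subset \SO(V)$, are already established in the excerpt.

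First, since $\SL_2(\bR)$ is path-connected and $\rho$ is continuous with $\rho(I_2) = I_3$, the image $\rho(\SL_2(\bR))$ is a connected subset of $\SO(V)$ containing $I_3$; therefore it lies in the identity component $\SO^+(V)$.

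Next I would show that $\rho(\SL_2(\bR))$ is open in $\SO^+(V)$. Both $\SL_2(\bR)$ and $\SO^+(V)$ are real Lie groups of dimension $3$. Because $\ker(\rho) = \{\pm I_2\}$ is finite and hence discrete, the differential $d\rho_{I_2}$, a linear map between the two $3$-dimensional tangent spaces, has trivial kernel and is therefore an isomorphism. By the inverse function theorem, $\rho$ is a local diffeomorphism near $I_2$; translating by elements of the image via the group law then shows that $\rho(\SL_2(\bR))$ contains an open neighborhood of each of its points, so it is open in $\SO^+(V)$.

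Finally, any open subgroup $H$ of a topological group $G$ is automatically closed, since $G \setminus H$ is a union of cosets $gH$, each of which is open. Since $\SO^+(V)$ is connected and $\rho(\SL_2(\bR))$ is a nonempty open-and-closed subgroup, it must equal $\SO^+(V)$. The main (mild) obstacle is justifying the openness step cleanly; if one wishes to avoid any Lie-theoretic input, an alternative is to compute $\rho$ on three well-chosen one-parameter subgroups of $\SL_2(\bR)$ --- the diagonal torus, the upper-triangular unipotent subgroup, and $\SO(2)$ --- directly from \eqref{eq:rho(A)}, and then verify by an Iwasawa-type $KAN$ decomposition of $\SO^+(V)$ that the resulting three one-parameter subgroups generate the entire target. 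I expect the Lie-theoretic route to be the cleanest and shortest.
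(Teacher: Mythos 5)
Your argument is correct and follows essentially the same route as the paper's sketch: show $\rho$ is a local diffeomorphism, conclude the image is an open (hence closed) subgroup of the connected group $\SO^+(V)$, and deduce equality. The only cosmetic difference is that you obtain injectivity of $d\rho_{I_2}$ from the discreteness of $\ker(\rho)=\{\pm I_2\}$ together with a dimension count, whereas the paper proposes verifying the Lie algebra isomorphism directly; both are standard and valid.
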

\begin{proof}
We sketch the proof, as this is well-known. 
One can verify that $\rho:\SL_2(\bR)\to \SO(V)$ is a local diffeomorphism 
by checking that it induces an isomorphism of the corresponding Lie algebras. 
Then, the image of $\rho$ is open in $\SO(V)$, and hence 
coincides with the identity component $\SO^+(V)$. 
\end{proof}

\begin{lemma}\label{lem:chplofrho(A)}
Let $A\in\SL_2(\bR)$. Then, the characteristic polynomial of $\rho(A)$ 
is given by $(t-1)(t^2 - (\Tr(A)^2 - 2)t + 1)$.  
\end{lemma}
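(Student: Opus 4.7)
The plan is to exploit the explicit matrix \eqref{eq:rho(A)} together with the fact, already recorded in \Cref{prop:imageofrho}, that $\det(\rho(A))=1$. For a $3\times3$ matrix $M$ the characteristic polynomial is $t^3-\Tr(M)\,t^2+c\,t-\det(M)$, and from $\det(\rho(A))=1$ the constant and leading terms of $(t-1)(t^2-(\Tr(A)^2-2)t+1)$ are already accounted for. Hence it suffices to identify two of the three eigenvalues.

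First I would compute the trace directly from \eqref{eq:rho(A)}:
\[
\Tr(\rho(A))=a^{2}+(ad+bc)+d^{2}=(a+d)^{2}-(ad-bc)=\Tr(A)^{2}-1,
\]
where I used $ad-bc=1$. Comparing with the expansion
\[
(t-1)\bigl(t^{2}-(\Tr(A)^{2}-2)t+1\bigr)=t^{3}-(\Tr(A)^{2}-1)t^{2}+(\Tr(A)^{2}-1)t-1,
\]
this matches the coefficient of $t^{2}$, and the constant term $-1$ matches $-\det(\rho(A))$.

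Next I would exhibit $1$ as an eigenvalue of $\rho(A)$. The key observation is that the element
\[
v\coloneqq A-\tfrac{\Tr(A)}{2}I_{2}
\]
has trace zero, so $v\in V$, and it commutes with $A$; therefore $\rho(A)v=AvA^{-1}=v$. When $A\neq\pm I_{2}$, one has $v\neq 0$, so $1$ is genuinely an eigenvalue of $\rho(A)$; the cases $A=\pm I_{2}$ give $\rho(A)=I_{3}$, for which the statement is verified directly (both sides equal $(t-1)^{3}$, since $\Tr(\pm I_{2})^{2}-2=2$).

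With $1$ an eigenvalue and $\det(\rho(A))=1$, the remaining two eigenvalues multiply to $1$; since their sum equals $\Tr(\rho(A))-1=\Tr(A)^{2}-2$, Vieta's formulas yield the factor $t^{2}-(\Tr(A)^{2}-2)t+1$, completing the proof. There is no real obstacle here beyond the bookkeeping for $A=\pm I_{2}$; the only mildly subtle point is recognising $A-\tfrac{\Tr(A)}{2}I_{2}$ as a canonical eigenvector of $\rho(A)$ in $V$, which bypasses any need to expand the $2\times 2$ principal minors of \eqref{eq:rho(A)} by hand.
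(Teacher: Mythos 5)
Your proof is correct, but it diverges from the paper's at the key step. Both arguments read off the coefficient of $t^2$ (equivalently $\Tr(\rho(A)) = (a+d)^2-1$) from the explicit matrix \eqref{eq:rho(A)} and use $\det(\rho(A))=1$ for the constant term. To pin down the remaining coefficient, the paper invokes the general fact that the characteristic polynomial of an isometry of an odd-dimensional inner product space with determinant $1$ is anti-palindromic, which forces the linear coefficient to be the negative of the quadratic one (and in particular forces $F(1)=0$). You instead exhibit the concrete fixed vector $v = A - \tfrac{\Tr(A)}{2}I_2 \in V$, which is nonzero unless $A=\pm I_2$ (a case you correctly dispose of separately), and then recover the quadratic factor from the trace and determinant via Vieta. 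The two routes cost about the same; yours is slightly more self-contained in that it does not appeal to the anti-palindromy of isometry characteristic polynomials, at the price of the case split for $A=\pm I_2$, which the paper's argument avoids. It is worth noting that your eigenvector is exactly $\tfrac12\bigl(2A-\Tr(A)I_2\bigr)$, the very vector the paper introduces immediately after the lemma as the eigenvector of $\rho(A)$ for the eigenvalue $1$; your argument in effect front-loads that observation. One minor citation point: $\det(\rho(A))=1$ is recorded in the text following \eqref{eq:rho(A)} rather than in \cref{prop:imageofrho}, though the latter of course also implies it.
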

\begin{proof}
Let $A = \begin{pmatrix}
a & b \\
c & d
\end{pmatrix}
\in \SL_2(\bR)$. Note that $ad-bc = 1$. 
Let $F(t)\in \bR[t]$ be the characteristic polynomial of $\rho(A)$. 
By \eqref{eq:rho(A)}, the coefficient of $t^2$ is 
\[ -(a^2 + ad + bc + d^2) = -(a^2 + ad + (ad - 1) + d^2) = -((a+d)^2 -1).  \]
On the other hand, $F(t)$ is anti-palindromic since $\rho(A)$ is an isometry of 
an odd-dimensional inner product space with $\det(\rho(A)) = 1$. Thus 
\[ \begin{split}
F(t)
&= t^3 -((a+d)^2 -1) t^2 + ((a+d)^2 -1) t - 1 \\
&= (t-1) (t^2 - ((a+d)^2 -2) t + 1) \\
& = (t-1) (t^2 - (\Tr(A)^2 - 2) t + 1). 
\end{split}
\]
The proof is complete. 
\end{proof}

Let $M = \bZ e_1 + \bZ e_2 + \bZ e_3 \subset V$. Define  
\[ \SO(M) \coloneqq \{ \alpha \in \SO(V) \mid \alpha M\subset M \} 
\quad\text{and} \quad
\SO^+(M) \coloneqq \SO^+(V) \cap \SO(M). \]

\begin{proposition}\label{prop:imageofrho_Z}
We have $\rho(\SL_2(\bZ)) = \SO^+(M)$. 
\end{proposition}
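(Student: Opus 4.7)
The strategy is to establish both inclusions. The inclusion $\rho(\SL_2(\bZ)) \subseteq \SO^+(M)$ is immediate from \eqref{eq:rho(A)}: when $A \in \SL_2(\bZ)$, the nine entries of $\rho(A)$ are integer polynomials in the entries of $A$, so $\rho(A)(M) \subseteq M$; combined with \cref{prop:imageofrho}, this places $\rho(A)$ in $\SO^+(V) \cap \SO(M) = \SO^+(M)$.

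For the reverse inclusion $\SO^+(M) \subseteq \rho(\SL_2(\bZ))$, I would take $\alpha \in \SO^+(M)$ and, using \cref{prop:imageofrho}, choose a lift $A = \begin{pmatrix} a & b \\ c & d \end{pmatrix} \in \SL_2(\bR)$ with $\rho(A) = \alpha$. Reading off the entries of \eqref{eq:rho(A)}, the condition $\alpha(M) \subseteq M$ becomes
\[ a^2, b^2, c^2, d^2, ac, bd \in \bZ, \quad 2ab, 2cd, ad+bc \in \bZ. \]
Combining $ad+bc \in \bZ$ with the relation $ad-bc=1$ also yields $2ad, 2bc \in \bZ$. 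It then suffices to upgrade these to $a, b, c, d \in \bZ$, for then $A \in \SL_2(\bZ)$ and $\alpha \in \rho(\SL_2(\bZ))$.

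To carry out the upgrade I would split on the value of $a$. The case $a = 0$ is easy: $bc = -1$ together with $b^2, c^2 \in \bZ$ forces $b, c \in \{\pm 1\}$, after which $bd \in \bZ$ puts $d \in \bZ$. When $a \neq 0$, the condition $a^2 \in \bZ$ gives either $a \in \bZ$ or $a = m\sqrt{s}$ with $m \in \bZ \setminus \{0\}$ and $s > 1$ square-free. In the integer case, dividing $2ab, ac, 2ad$ by $a$ places $b, c, d$ in $\bQ$, and their integer squares then promote them to $\bZ$.

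The main obstacle is ruling out the irrational case. The idea is that $ac = mc\sqrt{s} \in \bZ$ forces $c \in \sqrt{s}\bQ$, and then $c^2 \in \bZ$ combined with the square-freeness of $s$ bootstraps to $c \in \sqrt{s}\bZ$ (any candidate denominator, in lowest terms, would have its square dividing $s$, hence must be $\pm 1$). The analogous argument applied to $b$ and $d$ via $2ab, 2ad \in \bZ$ forces all four entries $a, b, c, d$ to lie in $\sqrt{s}\bZ$, whence $ad - bc \in s\bZ$, contradicting $ad - bc = 1$. This contradiction rules out the irrational case and completes the proof.
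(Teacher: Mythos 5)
Your proof is correct and follows essentially the same route as the paper: both directions are handled identically, with the reverse inclusion obtained by lifting $\alpha$ via \cref{prop:imageofrho}, reading off the integrality constraints from \eqref{eq:rho(A)}, deducing $a,b,c,d\in\sqrt{s}\,\bZ$ for a square-free $s$, and concluding $s=1$ from $ad-bc=1$. Your case analysis merely fills in the details of the step the paper states without proof, namely that the integrality conditions force all four entries into $\sqrt{s}\,\bZ$ for a common square-free $s$.
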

\begin{proof}
Let $A\in\SL_2(\bZ)$. Then, it follows from \eqref{eq:rho(A)} that 
$\rho(A)$ preserves $M$. Furthermore, we have $\rho(A)\in \SO^+(V)$ 
by \cref{prop:imageofrho}. Hence, $\rho(\SL_2(\bZ)) \subset \SO^+(M)$.  

Conversely, let $\alpha \in \SO^+(M)$. By \cref{prop:imageofrho}, there exists 
$A = \begin{pmatrix}
a & b \\
c & d
\end{pmatrix}\in\SL_2(\bR)$ such that $\alpha = \rho(A)$.
Since $\rho(A)$ preserves $M$, all entries of \eqref{eq:rho(A)} are integers. 
In particular, we have $a^2, b^2, c^2, d^2, 2ab, ac, bd, 2cd\in \bZ$, which implies that 
$a,b,c,d\in \sqrt{n}\bZ$ for some square-free $n\in \bZ_{\geq 1}$. 
Since $ad-bc = 1$, it follows that $n=1$, i.e., $A\in\SL_2(\bZ)$. 
This completes the proof. 
We remark that more general arguments can be found in 
\cite[Theorem 7.1]{Dolachev1996} and \cite{Hashimoto-Lee2025}. 
\end{proof}

Let $A\in\SL_2(\bZ)$, and suppose that $|\Tr(A)| > 2$. 
Then, it follows from \cref{lem:chplofrho(A)} that 
$1$ is a simple eigenvalue of $\rho(A)$. 
Since $A (2A - \Tr(A)I_2)A^{-1} = 2A - \Tr(A)I_2$, 
the vector $2A - \Tr(A)I_2\in M$ is an eigenvector of $\rho(A)$ corresponding to
the eigenvalue $1$. 
We say that a vector $v\in M$ is \textit{primitive} (in $M$) if 
the quotient $M/\bZ v$ is torsion-free. 
A primitive eigenvector of $\rho(A)$ corresponding to $1$ is given by 
\[ \frac{1}{\gcd(a-d, 2b, 2c)}\begin{pmatrix}
a-d & 2b \\
2c & -(a-d)
\end{pmatrix}
= \frac{1}{\gcd(a-d, 2b, 2c)}(2A - \Tr(A)I_2)
\]
or its minus, where $A = \begin{pmatrix}
a & b \\ 
c & d
\end{pmatrix}$. 
Hence, if $u$ is a primitive eigenvector of $\rho(A)$ corresponding to $1$, then 
\begin{alignat}{2}
(u,u) 
&= 4\det(u) \notag\\
&= \frac{-4}{\gcd(a-d, 2b, 2c)^2}((a-d)^2 + 4bc) \notag\\
&= \frac{-4}{\gcd(a-d, 2b, 2c)^2}((a-d)^2 + 4ad - 4) \notag\\
&= \frac{-4}{\gcd(a-d, 2b, 2c)^2}(\Tr(A)^2 - 4).  \label{eq:selfIPofu}
\end{alignat}

\section{K3 surfaces and their automorphisms}\label{sec:K3}

\subsection{Definitions and properties}
We recall some definitions and properties of K3 surfaces. See \cite{BHPV} for more details. 
A \textit{K3 surface} is a compact complex surface $X$ such that its canonical line bundle is 
trivial and $H^1(X, \cO_X) = 0$. Let $X$ be a K3 surface. 
The second cohomology group $H^2(X,\bZ)$ equipped with the cup product $(\;, \;)$ is 
an even unimodular lattice of signature $(3,19)$. 
The subspace $H^{1,1}_\bR(X) \coloneqq H^{1,1}(X) \cap H^2(X,\bR)$ of $H^2(X,\bR)$ 
has signature $(1,19)$. The submodule 
$\NS(X) := H^2(X,\bZ)\cap H^{1,1}(X)$ of $H^2(X,\bZ)$ 
is called the \textit{N\'eron-Severi lattice} or the \textit{Picard lattice}. 
For a divisor $D$ of $X$, we refer to the first Chern class of the line bundle 
associated to $D$ as the \textit{class} of $D$ for short.  
A curve in $X$ is called a \textit{$(-2)$-curve} if it is a smooth rational curve 
with self-intersection $-2$. 

\begin{lemma}\label{lem:irrclass}
Let $d\in \NS(X)$ be the class of an irreducible curve $C$. 
Then, we have $(d,d)\geq -2$. Moreover, if $(d,d)=-2$ then $C$ is a $(-2)$-curve. 
\end{lemma}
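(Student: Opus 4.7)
The plan is to reduce everything to the adjunction formula on the K3 surface $X$, exploiting the fact that $K_X = 0$. Concretely, I would invoke the adjunction formula in its arithmetic form: for any effective divisor $C$ on a smooth projective surface,
\[
2 p_a(C) - 2 = (C, C) + (C, K_X),
\]
where $p_a(C)$ is the arithmetic genus. Since $X$ is a K3 surface, its canonical class is trivial, so the right-hand side collapses to $(d,d)$, giving the identity $(d,d) = 2 p_a(C) - 2$.

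Next I would use that $C$ is \emph{irreducible}. For an irreducible projective curve, the arithmetic genus is always a nonnegative integer; this is standard and follows from comparing $p_a(C)$ with the geometric genus $p_g(\tilde C)$ of the normalization $\tilde C \to C$, using the fact that the nonnegative correction terms attached to the singular points make $p_a(C) - p_g(\tilde C) \geq 0$, while $p_g(\tilde C) \geq 0$. Plugging $p_a(C) \geq 0$ into the identity above yields $(d,d) \geq -2$, which gives the first assertion.

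For the second assertion, if $(d,d) = -2$, then $p_a(C) = 0$. Both inequalities $p_a(C) \geq p_g(\tilde C) \geq 0$ are thus forced to be equalities, so $\tilde C \cong \bP^1$ and the canonical map $\tilde C \to C$ is an isomorphism, i.e., $C$ is a smooth rational curve. Combined with the self-intersection $(d,d) = -2$, this is exactly the definition of a $(-2)$-curve on $X$.

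I expect no real obstacle here: the only subtlety is the equivalence "irreducible with $p_a = 0$ implies smooth rational," which I would either cite as a standard fact (for instance from \cite{BHPV}) or justify briefly via the normalization argument above. Everything else is a direct consequence of adjunction together with $K_X = 0$.
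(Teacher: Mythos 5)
Your argument is correct: it is the standard adjunction computation ($2p_a(C)-2=(d,d)$ since $K_X=0$, plus $p_a(C)\geq p_g(\tilde C)\geq 0$ for an irreducible curve, with equality forcing $C$ smooth rational) that underlies the fact the paper simply cites as \cite[Proposition VIII-(3.7)]{BHPV}. There is no gap and no divergence from the paper, which gives no independent proof beyond that citation.
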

\begin{proof}
This follows from \cite[Proposition VIII-(3.7)]{BHPV}. 
\end{proof}

We will need the following lemma. 

\begin{lemma}\label{lem:loxodromicand(-2)}
Let $\gamma$ be an automorphism of $X$. 
Suppose that $\gamma^*:H_\bR^{1,1}(X)\to H_\bR^{1,1}(X)$ has an eigenvalue whose 
absolute value is greater than $1$. If $C\subset X$ is an irreducible curve preserved 
by $\gamma$, then $C$ is a $(-2)$-curve. 
\end{lemma}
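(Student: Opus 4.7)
My plan is to show that $(C,C) = -2$; by \cref{lem:irrclass} this is exactly the condition ensuring $C$ is a $(-2)$-curve. Since $C$ is an irreducible $\gamma$-invariant curve, its class $[C] \in \NS(X)$ is fixed by $\gamma^*$, and $(C, C) \geq -2$ by \cref{lem:irrclass}. Because $H^2(X, \bZ)$ is an even lattice, $(C, C)$ is an even integer, so it suffices to establish the strict inequality $(C, C) < 0$.

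To this end, I would construct a $2$-dimensional $\gamma^*$-invariant subspace $U \subset H_\bR^{1,1}(X)$ of signature $(1,1)$ that is orthogonal to $[C]$. By the Hodge index theorem on $H_\bR^{1,1}(X)$ (of signature $(1, 19)$), the orthogonal complement $U^\perp$ will then have signature $(0, 18)$, i.e., be negative definite, forcing $(C, C) < 0$. The plane $U$ will be spanned by eigenvectors $u, u'$ of $\gamma^*$ for the eigenvalues $\lambda$ (with $|\lambda| > 1$, as given) and $1/\lambda$ (also an eigenvalue, since $\gamma^*$ is an isometry and its spectrum is therefore stable under $\mu \mapsto 1/\mu$). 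The pairing computations
\[
(u, u) = (\gamma^*u, \gamma^*u) = \lambda^2 (u, u), \qquad (u, [C]) = (\gamma^* u, \gamma^*[C]) = \lambda (u, [C]),
\]
together with $\lambda \neq \pm 1$, show that $u, u'$ are null and that $[C]$ is orthogonal to both $u$ and $u'$.

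Two subtleties remain, and these constitute the main obstacle. First, I must check that $\lambda$ can be taken real so that $u, u'$ can be chosen in $H_\bR^{1,1}(X)$; if $\lambda$ were non-real, then $\lambda, \bar\lambda, 1/\lambda, 1/\bar\lambda$ would be four distinct eigenvalues whose combined real generalized eigenspace would be at least four-dimensional and, by the same pairing analysis (generalized eigenspaces for eigenvalues $\mu, \nu$ are orthogonal unless $\mu \nu = 1$), would carry a form of signature $(2, 2)$ — inconsistent with the ambient signature $(1, 19)$. Second, I must verify that $U = \bR u + \bR u'$ is nondegenerate, i.e., $(u, u') \neq 0$; otherwise $U$ would be a $2$-dimensional totally isotropic subspace, again impossible in signature $(1, 19)$, whose Witt index is $1$. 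Once these two signature arguments are secured, the construction of $U$ is complete and the conclusion $(C, C) = -2$ follows at once.
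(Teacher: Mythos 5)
Your proof is correct and follows essentially the same route as the paper: both construct a $\gamma^*$-invariant plane of signature $(1,1)$ spanned by eigenvectors for $\lambda$ and $\lambda^{-1}$, place the class of $C$ in its negative-definite orthogonal complement, and conclude via the evenness of $H^2(X,\bZ)$ together with \cref{lem:irrclass}. The only difference is that the paper cites McMullen for the realness and simplicity of $\lambda$, whereas you derive the realness of $\lambda$ and the nondegeneracy of the plane directly from signature and Witt-index considerations, which is a harmless, self-contained substitute.
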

\begin{proof}
Let $\lambda$ be an eigenvalue of $\gamma^*$ whose absolute value is greater than $1$. 
Then, $\lambda$ is a real number and a simple eigenvalue, and $\lambda^{-1}$ is also 
a simple eigenvalue of $\gamma^*$; see e.g. \cite[Section 3]{McMullen2002}. Set
\[ W = \{ v\in H^{1,1}_\bR(X) \mid 
 ((\gamma^*)^2 - (\lambda + \lambda^{-1})\gamma^* +\id)v = 0 \}. 
\]
Then, $W$ is a nondegenerate $2$-dimensional subspace of signature 
$(1,1)$ which is invariant under $\gamma^*$. 
We remark that $W^\perp \subset H^{1,1}_\bR(X)$ is negative definite 
since the signature of $H^{1,1}_\bR(X)$ is $(1, 19)$. 

Now suppose that $\gamma$ preserves an irreducible curve $C$, and 
let $d\in H^{1,1}_\bR(X)$ be its class. We have $\gamma^*d = d$, which implies that 
$d\in W^\perp$. 
Since $d$ also belongs to $H^{2}(X,\bZ)$, and $H^{2}(X,\bZ)$ is an even lattice, 
we obtain $(d,d)\leq -2$. Hence, $C$ is a $(-2)$-curve by \cref{lem:irrclass}.
\end{proof}

\subsection{K3 surfaces in $\bP^1\times \bP^1 \times \bP^1$}\label{ss:K3inP1P1P1}
Let $X$ be a smooth surface in $Z = \bP^1_x\times \bP^1_y \times \bP^1_z$ defined by 
a tri-homogeneous polynomial of tri-degree $(2,2,2)$. Then $X$ is a K3 surface. 
Let $\pi_x:X\to \bP^1_y\times \bP^1_z$ denote the restriction of the projection 
$Z \to \bP^1_y \times \bP^1_z$. 
Similarly, we define $\pi_y:X\to \bP^1_x\times \bP^1_z$, 
and $\pi_z:X\to \bP^1_x\times \bP^1_y$. 
Then $\pi_x$, $\pi_y$, and $\pi_z$ are double covers and define involutions 
$\iota_x$, $\iota_y$, and $\iota_z:X\to X$, respectively. 

Let $h_x\in \NS(X)$ denote the class of a fiber of the projection $X \to \bP^1_x$. 
We define $h_y$ and $h_z \in \NS(X)$ similarly. 
Let $M \subset \NS(X)$ be the sublattice generated by $h_x$, $h_y$, and $h_z$, i.e.,   
$M \coloneqq \bZ h_x \oplus \bZ h_y \oplus \bZ h_z$. 
In what follows, $M^\perp$ denotes the orthogonal of $M$ in $\NS(X)$, 
that is, $M^\perp = \{ v\in \NS(X) \mid (v, w) = 0 \text{ for any $w\in M$} \}$. 

\begin{lemma}\label{lem:noeffinMperp}
The sublattice $M^\perp$ of $\NS(X)$ contains no effective class. 
In particular, it contains no element with self-inner product $-2$. 
\end{lemma}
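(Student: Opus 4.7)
The plan is to exploit that $M$ contains a distinguished ample class, namely $H \coloneqq h_x + h_y + h_z$. This is the class of the restriction to $X$ of $\cO_Z(1,1,1)$, which is very ample on $Z = \bP^1_x \times \bP^1_y \times \bP^1_z$ (via the Segre embedding). Hence $H$ is very ample on $X$, and in particular ample.

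First I would prove the main assertion: $M^\perp$ contains no nonzero effective class. Suppose for contradiction that $v \in M^\perp$ is the class of a nonzero effective divisor $D = \sum n_i C_i$ with $n_i > 0$ and $C_i$ irreducible curves. Since $H$ is ample, $(C_i, H) > 0$ for every $i$, so $(v, H) = \sum n_i (C_i, H) > 0$. On the other hand, $v \in M^\perp$ forces $(v, h_x) = (v, h_y) = (v, h_z) = 0$, whence $(v, H) = 0$, a contradiction.

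For the second assertion, suppose $v \in M^\perp$ has $(v, v) = -2$. By Riemann--Roch on the K3 surface $X$ applied to a line bundle $L$ with $c_1(L) = v$,
\[
\chi(L) = 2 + \tfrac{1}{2}(v, v) = 1,
\]
so $h^0(L) + h^0(-L) \geq h^0(L) - h^1(L) + h^2(L) = 1$ by Serre duality. Hence either $v$ or $-v$ is the class of a nonzero effective divisor. But $-v$ also lies in $M^\perp$, so in either case we contradict the first part of the lemma.

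I do not anticipate any serious obstacle here; the only ingredient beyond formal lattice manipulation is the Nakai--Moishezon/positivity statement $(C, H) > 0$ for $H$ ample and $C$ irreducible, together with the Riemann--Roch formula on a K3 surface, both of which are standard and available from \cite{BHPV}. The one point that needs a brief verification is that $H = h_x + h_y + h_z$ is indeed ample; this follows because the closed embedding $X \hookrightarrow Z$ composed with the Segre embedding of $Z$ realizes $H$ as the hyperplane class of a projective embedding of $X$.
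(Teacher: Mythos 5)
Your proof is correct, and for the main assertion it takes a genuinely different route from the paper. The paper argues geometrically: if $d\in M^\perp$ is effective, then $(d,h_x)=(d,h_y)=0$ forces the supporting divisor to be contracted by $\pi_z$, and likewise $(d,h_x)=(d,h_z)=0$ forces it to be contracted by $\pi_y$; a curve contracted by both projections would be a point, which is absurd. You instead observe that $H=h_x+h_y+h_z$ is (very) ample via the Segre embedding and pair it against the effective class, which is a one-line numerical argument. Both are sound; yours is perhaps more robust in that it only uses the existence of an ample class inside $M$ and would apply verbatim to any sublattice containing an ample class, while the paper's argument is tailored to the product structure of $Z$ but avoids invoking ampleness and Nakai--Moishezon positivity explicitly. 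For the second assertion both proofs are essentially identical: the paper cites \cite[Proposition VIII-(3.7)]{BHPV} for the fact that a $(-2)$-class or its negative is effective, and your Riemann--Roch computation $\chi(L)=2+\tfrac12(v,v)=1$ together with Serre duality is precisely the proof of that cited proposition. The only point worth making explicit is the one you already flag implicitly: ``effective class'' must be read as the class of a \emph{nonzero} effective divisor (so that $(v,H)>0$ rather than $\geq 0$, and so that the section produced by Riemann--Roch yields a genuine curve); since $(v,v)=-2$ forces $v\neq 0$, this causes no difficulty.
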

\begin{proof}
Suppose that $M^\perp$ contains an effective class $d$ represented by an effective 
divisor $D$. Since $(d, h_x) = 0$ and $(d, h_y) = 0$, the divisor $D$ must be 
contracted by $\pi_z$. Similarly, $D$ is also contracted by $\pi_y$, but it is impossible. 
Hence, $M^\perp$ of $\NS(X)$ contains no effective class.  
In particular, $M^\perp$ contains no element with self-inner product $-2$ because 
if such an element existed, then either it or its minus would be an effective class; 
see \cite[Proposition VIII-(3.7)]{BHPV}.  
\end{proof}

\begin{lemma}\label{lem:Misinvariant}
If $\pi_x$ does not contract any curve then 
$\iota_x^*|_{\bZ h_y \oplus \bZ h_z} = \id_{\bZ h_y \oplus \bZ h_z}$ and 
$\iota_x^*|_{(\bZ h_y \oplus \bZ h_z)^\perp} = -\id_{(\bZ h_y \oplus \bZ h_z)^\perp}$. 
Similar statements hold for $\iota_y$ and $\iota_z$. In particular, the sublattice $M$ is 
preserved by the three involutions $\iota_x^*$, $\iota_y^*$ and $\iota_z^*$, 
provided that none of them contracts any curve.  
\end{lemma}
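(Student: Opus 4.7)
The plan is to exploit the fact that $h_y$ and $h_z$ are pulled back from $\bP^1_y\times\bP^1_z$ via $\pi_x$, and then to use the standard pushforward/pullback identities for the finite double cover $\pi_x$.

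First, I would observe that the morphism $X\to\bP^1_y$ factors as $\pi_x$ followed by the first projection, so that $h_y=\pi_x^{*}f_y$, where I write $f_y,f_z\in H^{2}(\bP^1_y\times\bP^1_z,\bZ)$ for the fiber classes of the two projections; analogously, $h_z=\pi_x^{*}f_z$. Since $\iota_x$ is the Galois involution of $\pi_x$, every class in the image of $\pi_x^{*}$ is fixed by $\iota_x^{*}$, which immediately yields $\iota_x^{*}h_y=h_y$ and $\iota_x^{*}h_z=h_z$, proving the first assertion. Along the way, the identity $\pi_{x*}\pi_x^{*}=2\cdot\id$ combined with $f_y^{2}=f_z^{2}=0$ and $f_y\cdot f_z=1$ gives $h_y^{2}=h_z^{2}=0$ and $h_y\cdot h_z=2$, so that $\bZ h_y\oplus\bZ h_z$ is a nondegenerate rank-$2$ sublattice of $\NS(X)$.

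Next, assume $\pi_x$ contracts no curve; then $\pi_x$ is a finite morphism of degree $2$, and I have at my disposal both the projection formula $(\alpha,\pi_x^{*}\beta)_X=(\pi_{x*}\alpha,\beta)_B$ on $H^{2}$ and the Galois identity $\pi_x^{*}\pi_{x*}\alpha=\alpha+\iota_x^{*}\alpha$. Given $v\in(\bZ h_y\oplus\bZ h_z)^{\perp}$, the projection formula gives $(\pi_{x*}v,f_y)_B=(v,h_y)_X=0$ and similarly $(\pi_{x*}v,f_z)_B=0$; since the intersection form on $H^{2}(\bP^1_y\times\bP^1_z,\bQ)$ is nondegenerate with $f_y,f_z$ as a basis, this forces $\pi_{x*}v=0$, whence $v+\iota_x^{*}v=\pi_x^{*}\pi_{x*}v=0$ and therefore $\iota_x^{*}v=-v$. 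This establishes the second assertion, and the same argument applies verbatim to $\iota_y$ and $\iota_z$.

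For the last claim, it remains to verify that $\iota_x^{*}h_x\in M$ (and symmetrically for $h_y$ and $h_z$). Decomposing $h_x=h_x^{+}+h_x^{-}$ into its $\pm1$-eigenparts in $\NS(X)\otimes\bQ$ and writing $h_x^{+}=ah_y+bh_z$, pairing with $h_y$ and $h_z$ using the intersection numbers just established together with $h_x\cdot h_y=h_x\cdot h_z=2$ yields $a=b=1$; hence $\iota_x^{*}h_x=2h_y+2h_z-h_x\in M$, and the symmetric calculations for $\iota_y^{*}$ and $\iota_z^{*}$ conclude the proof. The main technical point is the Galois identity $\pi_x^{*}\pi_{x*}=\id+\iota_x^{*}$, which is standard for finite degree-$2$ covers of smooth varieties and whose applicability is precisely guaranteed by the no-contraction hypothesis.
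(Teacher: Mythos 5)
Your proof is correct, but it takes a genuinely different route from the paper. For the key assertion that $\iota_x^*$ acts as $-\id$ on $(\bZ h_y\oplus\bZ h_z)^{\perp}$, the paper argues globally: it notes that $\iota_x^*$ is an involution of $H^2(X,\bZ)$, reduces the claim to showing $\Tr\bigl(\iota_x^*:H^2(X,\bQ)\to H^2(X,\bQ)\bigr)=-18$, and computes this trace via the topological Lefschetz fixed point formula, using that the fixed locus of $\iota_x$ is isomorphic to the nonsingular branch curve of bidegree $(4,4)$ in $\bP^1_y\times\bP^1_z$, whose Euler characteristic is $-16$. You instead argue functorially through the cover: the projection formula forces $\pi_{x*}v=0$ for any $v$ orthogonal to $h_y=\pi_x^*f_y$ and $h_z=\pi_x^*f_z$, and the transfer identity $\pi_x^*\pi_{x*}=\id+\iota_x^*$ for the finite degree-$2$ Galois cover then gives $\iota_x^*v=-v$ directly. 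Both arguments use the no-contraction hypothesis in an essential way — the paper to identify the fixed locus with a smooth $(4,4)$ curve, you to guarantee that $\pi_x$ is finite so that the transfer identity holds (if a fiber were contracted, its class would be killed by $\pi_{x*}$ without being anti-invariant). Your approach avoids the Lefschetz formula and the genus computation of the branch curve, at the cost of invoking the Gysin pushforward and its standard identities; it also yields, as a bonus, the explicit formula $\iota_x^*h_x=-h_x+2h_y+2h_z$, which the paper only records afterwards in the matrices \eqref{eq:matricesofiotas}. One small point worth making explicit in your write-up: the eigenspace decomposition $h_x=h_x^++h_x^-$ with $h_x^+\in\bQ h_y+\bQ h_z$ uses that $\bZ h_y\oplus\bZ h_z$ is nondegenerate, so that $H^2(X,\bQ)$ splits as this subspace plus its orthogonal complement; you have the needed intersection numbers, so this is immediate, but it is the step that identifies the full $+1$-eigenspace with $\bQ h_y+\bQ h_z$.
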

\begin{proof}
We only show the statement for $\iota_x$, as the arguments for $\iota_y$ and $\iota_z$ 
are entirely analogous. 
It is clear that $\iota_x^* h_y = h_y$ and $\iota_x^* h_z = h_z$. 
Note that $\iota_x^*:H^2(X,\bZ) \to H^2(X,\bZ)$ is an involution, and in particular, 
its eigenvalues are $1$ or $-1$. Therefore, to prove that 
$\iota_x^*|_{(\bZ h_y \oplus \bZ h_z)^\perp} = -\id_{(\bZ h_y \oplus \bZ h_z)^\perp}$, 
it suffices to show that $\Tr(\iota_x^*:H^2(X,\bQ) \to H^2(X,\bQ)) = -18$ since 
$\dim H^2(X,\bQ) = 22$. 

Suppose that $\pi_x$ does not contract any curve. Then, the fixed point set $R\subset X$ of 
$\iota_x$ is isomorphic to the branch curve $B\subset \bP_y \times \bP_z$ via $\pi_x$. 
Since $B$ is a nonsingular curve of bi-degree $(4,4)$, its Euler characteristic can be computed 
to be $-16$. 
On the other hand, by the topological Lefschetz fixed point formula for $\iota_x$, 
the Euler characteristic of $R$ is given by 
\[ \sum_{j=0}^4 (-1)^j \Tr(\iota_x^*:H^i(X,\bQ) \to H^i(X,\bQ)) 
= 2 + \Tr(\iota_x^*:H^2(X,\bQ) \to H^2(X,\bQ)).\]
Hence, we obtain $\Tr(\iota_x^*) = -18$, as required. 
A similar discussion in a more general situation can be found in \cite[Section 4]{Nikulin1983}.
\end{proof}

By \cref{lem:Misinvariant}, if none of $\pi_x$, $\pi_y$, and $\pi_z$ contracts any curve, 
then the matrix expressions of $\iota_x^*|_M$, $\iota_y^*|_M$,
and $\iota_z^*|_M$, with respect to the basis $h_x, h_y, h_z$,  are given by 
\begin{equation}\label{eq:matricesofiotas}
\begin{pmatrix}
-1 & 0 & 0 \\
2 & 1 & 0 \\
2 & 0 & 1 
\end{pmatrix}, 
\begin{pmatrix}
1 & 2 & 0 \\
0 & -1 & 0 \\
0 & 2 & 1 
\end{pmatrix}, \text{ and } 
\begin{pmatrix}
1 & 0 & 2 \\
0 & 1 & 2 \\
0 & 0 & -1 
\end{pmatrix},
\end{equation}
respectively.

\begin{proposition}\label{prop:no_nontrivial_relation}
Suppose that none of $\pi_x$, $\pi_y$, and $\pi_z$ contracts any curve. Then 
\[ \langle \iota_x^*|_M, \iota_y^*|_M, \iota_z^*|_M \rangle 
= \langle \iota_x^*|_M \rangle * \langle \iota_y^*|_M \rangle * \langle \iota_z^*|_M \rangle
\cong (\bZ/2\bZ) * (\bZ/2\bZ) * (\bZ/2\bZ),  
\]
and hence,    
\[ \langle \iota_x, \iota_y, \iota_z \rangle 
= \langle \iota_x \rangle * \langle \iota_y\rangle * \langle \iota_z \rangle
\cong (\bZ/2\bZ) * (\bZ/2\bZ) * (\bZ/2\bZ).
\]   
In particular, $\iota_x\circ\iota_y$ and $\iota_z\circ\iota_x\circ\iota_y\circ\iota_z$ 
satisfy no nontrivial group-theoretic relation. 
\end{proposition}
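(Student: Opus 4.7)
The plan is to realize $\iota_x^*|_M, \iota_y^*|_M, \iota_z^*|_M$ as three reflections on the Lorentzian lattice $M$ and run the standard ping-pong lemma on the associated hyperbolic plane. Since $h_i^2 = 0$ (two distinct fibers of one projection are disjoint) and $h_i \cdot h_j = 2$ for $i \neq j$ (two general fibers of two different projections meet $X$ in two points), the Gram matrix of $M$ in the basis $h_x, h_y, h_z$ is
\[\begin{pmatrix} 0 & 2 & 2 \\ 2 & 0 & 2 \\ 2 & 2 & 0 \end{pmatrix},\]
whose eigenvalues $4, -2, -2$ give signature $(1,2)$. One connected component of $\{v \in M \otimes \bR : (v, v) > 0\}$ therefore projectivizes to a copy of the hyperbolic plane $\mathbb{H}^2$. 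Reading off the $(-1)$-eigenvectors from the matrices in \eqref{eq:matricesofiotas}, I obtain the roots $v_x = h_x - h_y - h_z$ and cyclically $v_y, v_z$, with $(v_i, v_i) = -4$ and $(v_i, v_j) = 4$ for $i \neq j$, so each $\iota_i^*|_M$ is the reflection through the hyperplane $v_i^\perp$.

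Next I would locate the geodesics $\ell_i := v_i^\perp \cap \mathbb{H}^2$ and see that they bound an ideal triangle. Direct computation shows $v_x^\perp = \bR h_y + \bR h_z$ and cyclically, so $v_y^\perp \cap v_z^\perp = \bR h_x$; since $h_x$ is null, this means $\ell_y$ and $\ell_z$ share an ideal endpoint $[h_x] \in \partial \mathbb{H}^2$. By symmetry the three geodesics bound an ideal triangle $T$ with ideal vertices $[h_x], [h_y], [h_z]$, each $\ell_i$ being the side opposite $[h_i]$.

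The main step is the ping-pong. Let $U_i \subset \mathbb{H}^2$ be the open half-plane bounded by $\ell_i$ on the side opposite to the interior of $T$. The three open sets $U_x, U_y, U_z$ are pairwise disjoint, and for $j \neq i$ the set $U_j$ lies on the $T$-side of $\ell_i$ (since $U_j$ and $U_i$ are separated by $T$); hence the reflection $\iota_i^*|_M$ carries each $U_j$ into $U_i$, and also carries any fixed base point $p_0 \in T^\circ$ into $U_i$. By the classical ping-pong lemma for free products of order-$2$ cyclic groups, every nontrivial reduced alternating word in $\iota_x^*|_M, \iota_y^*|_M, \iota_z^*|_M$ sends $p_0$ into one of the $U_i$'s and therefore acts nontrivially, which proves the first isomorphism of the proposition. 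The second isomorphism follows immediately, since the restriction map $\langle \iota_x, \iota_y, \iota_z \rangle \to \Aut(M)$ sends alternating words to alternating words and so is injective on this subgroup by what was just shown. For the \emph{in particular} clause I would invoke the normal form theorem for free products: any nontrivial reduced word in $\iota_x\iota_y$ and $\iota_z(\iota_x\iota_y)\iota_z$ unfolds, using $\iota_z^2 = \id$, into a nontrivial reduced alternating word in $\{\iota_x, \iota_y, \iota_z\}$, so the two listed elements generate a free group of rank $2$.

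The main obstacle I anticipate is the ping-pong verification — specifically, confirming that each $U_j$ lies on the $T$-side of $\ell_i$ so that reflection in $\ell_i$ sends it into $U_i$. Once the ideal triangle has been identified, this reduces to a picture-level argument in $\mathbb{H}^2$, for instance by conjugating to the model ideal triangle with vertices $0, 1, \infty$ in the upper half-plane, where the three reflections become $z \mapsto -\bar z$, $z \mapsto 2-\bar z$, and inversion in the unit semicircle, and the inclusions $\iota_i^*(U_j) \subset U_i$ are elementary.
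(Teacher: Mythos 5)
Your proof is correct, and it is in fact more self-contained than the paper's: the paper disposes of this proposition in one line by pointing at the matrices \eqref{eq:matricesofiotas} and citing the argument of Cantat--Oguiso, which runs a ping-pong on the growth of the coefficients of $w^*h_i$ in the basis $h_x,h_y,h_z$ for reduced words $w$. You instead identify each $\iota_\bullet^*|_M$ as the reflection in a root of square $-4$ and play ping-pong with the three outer half-planes of an ideal triangle in $\mathbb{H}^2$. Your computations all check out: $(h_i,h_i)=0$ and $(h_i,h_j)=2$ give signature $(1,2)$; the $(-1)$-eigenvectors are $v_x=h_x-h_y-h_z$ and its cyclic images (note $v_y$ is exactly the paper's basis vector $e_2$, of square $-4$); $v_y^\perp\cap v_z^\perp=\bR h_x$ is isotropic, so the three fixed geodesics pairwise meet only at the ideal points $[h_x],[h_y],[h_z]$ and bound an ideal triangle, whose outer half-planes are pairwise disjoint with each $U_j$ contained in the $T$-side of $\ell_i$ for $j\neq i$. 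The descent from the lattice representation to $\langle\iota_x,\iota_y,\iota_z\rangle$ and the normal-form argument for the ``in particular'' clause (junctions of the form $\iota_y\iota_z$, $\iota_z\iota_x$, etc.\ never cancel) are also fine. What your route buys is a complete proof inside the paper at the cost of a little hyperbolic geometry; what the paper's route buys is brevity and uniformity with the Wehler-surface literature. One small slip in your closing remark: the three reflections $z\mapsto-\bar z$, $z\mapsto 2-\bar z$, and inversion in the \emph{unit} semicircle do not bound an ideal triangle --- the line $x=0$ meets $|z|=1$ at the interior point $i$. For the triangle with ideal vertices $0,1,\infty$ the third reflection is inversion in $|z-\tfrac12|=\tfrac12$ (or take vertices $-1,1,\infty$ to keep the unit circle). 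This does not affect your argument, since the disjointness and nesting properties you need are standard for any ideal triangle, whose sides meet only at ideal points.
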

\begin{proof}
By the matrix expressions \eqref{eq:matricesofiotas}, 
this follows in a similar manner to the argument in \cite[Section 3]{Cantat-Oguiso2015}. 
\end{proof}

With respect to the basis 
$e_1 \coloneqq h_x$, $e_2 \coloneqq -h_x + h_y - h_z$, $e_3 \coloneqq h_z$, 
the Gram matrix of $M$ is given by 
\[ \begin{pmatrix}
0 & 0 & 2 \\
0 & -4 & 0 \\
2 & 0 & 0
\end{pmatrix}.  
\]
Thus, we identify this lattice with the lattice 
$M\subset V = \{ v \in \M_2(\bR) \mid \Tr(v) = 0 \}$ 
introduced in \S\ref{ss:SL_2->S0^+(V)}. 
Let $\rho:\SL(2,\bZ) \to \SO^+(M)$ be the homomorphism described there. 
We make use of this homomorphism for the computation below. 

Set $\tilde\phi := \iota_x\circ \iota_y$ and 
$\psi := \iota_z\circ \iota_x\circ \iota_y\circ \iota_z$, and 
assume that none of $\pi_x$, $\pi_y$, and $\pi_z$ contracts any curve.
By \eqref{eq:matricesofiotas}, one can see that 
all eigenvalues of $\tilde\phi^*|_M$ are $1$. 
The same holds for $\psi^*|_M$ since it is the conjugate of $\tilde\phi^*|_M$ by $\iota_z^*|_M$. 
In particular, since $\det(\tilde\phi^*|_M) = 1$ and $\det(\psi^*|_M) = 1$ 
and every automorphism preserves the ample cone, it follows that 
$\tilde\phi^*|_M$ and $\psi^*|_M$ belong to $\SO^+(M)$.  
 
Thus, by \cref{prop:imageofrho_Z}, there exists $\tilde\Phi\in \SL_2(\bZ)$ such that 
$\rho(\tilde\Phi) = \tilde\phi^*|_M$. 
Moreover, by \cref{lem:chplofrho(A)}, we have $\Tr(\tilde\Phi) = 2$ or $-2$. 
We may assume that $\Tr(\tilde\Phi) = 2$ by replacing $\tilde\Phi$ with $-\tilde\Phi$ 
if necessary. Then, there exists $P\in \SL_2(\bZ)$ and $s\in \bZ\setminus\{0\}$ such that 
\[
P^{-1}\tilde\Phi P = \begin{pmatrix}
1 & s \\
0 & 1
\end{pmatrix}.
\]
We may assume $\tilde\Phi = \begin{pmatrix}
1 & s \\
0 & 1
\end{pmatrix}$
by replacing $\rho$ with $\rho(P\bullet P^{-1})$. 
Recall that $M^\perp$ is the orthogonal of $M$ in $\NS(X)$. 

\begin{proposition}\label{prop:nofixedcurve}
Suppose that none of $\pi_x$, $\pi_y$, and $\pi_z$ contracts any curve. 
For any $N> 2|\det(M^\perp)|$ and $i\geq 1$, 
$\gamma_i$ preserves no irreducible curve, where  
$\gamma_i$'s ($i\geq 0$) are the automorphisms defined recursively by 
\[
\gamma_0 \coloneqq \psi, \quad 
\gamma_{i} \coloneqq [\tilde\phi^N, \gamma_{i-1}] \quad(i\geq 1). 
\] 
\end{proposition}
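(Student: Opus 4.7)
The plan is to apply \cref{lem:loxodromicand(-2)}, which reduces the proposition to two tasks: (i) show that $\gamma_i^*$ acts on $H^{1,1}_\bR(X)$ with an eigenvalue of absolute value greater than $1$, and (ii) rule out any fixed $(-2)$-class in $\NS(X)$. Both will come from transporting the recursion $\gamma_i = [\tilde\phi^N, \gamma_{i-1}]$ through the identification $\rho : \SL_2(\bZ) \to \SO^+(M)$. Writing $\Gamma_i \in \SL_2(\bZ)$ for a lift of $\gamma_i^*|_M$, we have $\Gamma_i = \Gamma_{i-1}\tilde\Phi^N\Gamma_{i-1}^{-1}\tilde\Phi^{-N}$; using $\tilde\Phi^N = \left(\begin{smallmatrix}1 & Ns \\ 0 & 1\end{smallmatrix}\right)$ a direct computation shows
\[
\Tr(\Gamma_i) = 2 + N^2 s^2 c_{i-1}^2, \qquad c_i = -Ns\, c_{i-1}^2,
\]
where $c_j$ denotes the $(2,1)$-entry of $\Gamma_j$. \Cref{prop:no_nontrivial_relation} forces $c_0 \neq 0$ (otherwise $\Psi$ is upper triangular and commutes with $\tilde\Phi$, contradicting freeness of $\langle \Psi, \tilde\Phi \rangle$), so by induction $c_i \neq 0$ and $|\Tr(\Gamma_i)| > 2$ for every $i \geq 1$. \Cref{lem:chplofrho(A)} then gives (i).

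For (ii), I would first pin down the $1$-eigenspace of $\gamma_i^*$ on $\NS(X)\otimes\bQ$. Each of $\iota_x^*, \iota_y^*, \iota_z^*$ acts as $-\id$ on $M^\perp$ (the Lefschetz-trace argument in \cref{lem:Misinvariant} combined with $M^\perp \subset (\bZ h_y \oplus \bZ h_z)^\perp$), so $\tilde\phi^*|_{M^\perp} = \psi^*|_{M^\perp} = \id$ and hence $\gamma_i^*|_{M^\perp} = \id$. On $M\otimes\bQ$ the $1$-eigenspace is the line spanned by the primitive integer vector $u_i = \frac{1}{g_i}(2\Gamma_i - \Tr(\Gamma_i)I_2)$. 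Consequently, any $\gamma_i$-fixed $(-2)$-class $d \in \NS(X)$ decomposes uniquely as $d = \alpha u_i + w_2$ with $\alpha \in \bQ$ and $w_2 \in M^\perp \otimes \bQ$. Using \eqref{eq:selfIPofu} together with the above trace/$c_i$ expressions one obtains the lower bound $|(u_i, u_i)| \geq N^2 s^2$, and negative-definiteness of $M^\perp$ combined with $(d, d) = -2$ forces $\alpha^2 |(u_i, u_i)| \leq 2$, hence $|\alpha| \leq \sqrt{2}/(N|s|)$.

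The contradiction comes from integrality. Setting $p := |\det(M^\perp)|$, \cref{lem:detM2-multiple} produces $p w_2 \in M^\perp$ and thus $p w_1 = p\alpha u_i \in \NS(X)$. Since $u_i$ is primitive in $M$ (and remains primitive in $\NS(X) \cap \bQ M$ after a harmless replacement, as explained below), $p\alpha u_i \in \NS(X)$ forces $p\alpha \in \bZ$, so $|\alpha| \geq 1/p$ whenever $\alpha \neq 0$. Combined with $|\alpha| \leq \sqrt{2}/(N|s|) \leq \sqrt 2/N$, this gives $N \leq p\sqrt{2}$, contradicting the hypothesis $N > 2p$. Hence $\alpha = 0$, so $d = w_2 \in \NS(X) \cap (M^\perp \otimes \bQ) = M^\perp$ (using primitivity of $M^\perp$ in $\NS(X)$), contradicting \cref{lem:noeffinMperp}.

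The delicate point is the primitivity bookkeeping in the final step: although $u_i$ is primitive in $M$, in $\NS(X) \cap \bQ M$ it may be divisible by some integer $n$ controlled by the glue index of $M \oplus M^\perp \hookrightarrow \NS(X)$, so to secure the bound $|\alpha| \geq 1/p$ one must pass to the primitive representative $\tilde u_i := u_i/n$ and re-estimate $|(\tilde u_i, \tilde u_i)|$ accordingly. Calibrating this trade-off is exactly what the threshold $N > 2|\det(M^\perp)|$ in the statement is designed to accommodate.
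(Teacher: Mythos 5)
Your proposal follows essentially the same route as the paper's proof: the explicit computation of $\Gamma_i = \Gamma_{i-1}\tilde\Phi^N\Gamma_{i-1}^{-1}\tilde\Phi^{-N}$ with the $(2,1)$-entry nonzero forced by \cref{prop:no_nontrivial_relation}, the reduction to $(-2)$-curves via \cref{lem:loxodromicand(-2)}, the decomposition $e = \alpha u + v$ together with the bound $|(u,u)| \geq N^2s^2$ from \eqref{eq:selfIPofu}, and the integrality of $|\det(M^\perp)|\,\alpha$ from \cref{lem:detM2-multiple}. The only divergence is your closing caveat about $u$ possibly failing to be primitive in $\NS(X)\cap \bQ M$; the paper simply asserts primitivity of $u$ in the ambient lattice at the corresponding step, so you are, if anything, being more careful there.
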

\begin{proof}
Put $D = |\det(M^\perp)|$, and let $N> 2D$ and $i\geq 1$.  
Let $\Gamma_{i-1}\in \SL_2(\bZ)$ be an element such that 
$\rho(\Gamma_{i-1}) = \gamma_{i-1}^*|_M$, and write 
$\Gamma_{i-1} = \begin{pmatrix}
a & b \\ 
c & d
\end{pmatrix}$, 
where $a$, $b$, $c$, and $d$ are integers with $ad - bc = 1$. 
Note that $c\neq 0$; otherwise $\Gamma_{i-1}$ and $\tilde\Phi$ would commute, 
and hence $\gamma_{i-1}^*|_M$ and $\tilde\phi^*|_M$ would also commute, which contradicts 
\cref{prop:no_nontrivial_relation}.  
Put $\Gamma_i = [\Gamma_{i-1}^{-1}, \tilde\Phi^{-N}]$. Then 
\[ \begin{split}
\rho(\Gamma_{i}) 
&= \rho(\Gamma_{i-1} \tilde\Phi^N \Gamma_{i-1}^{-1} \tilde\Phi^{-N}) \\
&= \gamma_{i-1}^*|_M \circ (\tilde\phi^*|_M)^N \circ 
  (\gamma_{i-1}^*|_M)^{-1} \circ (\tilde\phi^*|_M)^{-N} \\
&= (\tilde\phi^{-N} \circ \gamma_{i-1}^{-1} \circ \tilde\phi^N \circ \gamma_{i-1} )^*|_M\\
& = \gamma_i^*|_M, 
\end{split}
\]
and 
\[ \begin{split}
\Gamma_{i} 
&= \Gamma_{i-1} \tilde\Phi^N \Gamma_{i-1}^{-1} \tilde\Phi^{-N} \\  
&= \begin{pmatrix}
a & b \\
c & d
\end{pmatrix}
\begin{pmatrix}
1 & Ns \\
0 & 1
\end{pmatrix}
\begin{pmatrix}
d & -b \\
-c & a
\end{pmatrix}
\begin{pmatrix}
1 & -Ns \\
0 & 1
\end{pmatrix}
\\
&= \begin{pmatrix}
1-Nsac & - Ns + Nsa^2 + N^2s^2ac \\
-Nsc^2 & 1 + Nsac + N^2s^2c^2
\end{pmatrix}. 
\end{split}
\]
In particular, we have $\Tr(\Gamma_i) = 2 + N^2s^2c^2 > 2$, and $1$ is a simple eigenvalue 
of $\gamma_i^*|_M = \rho(\Gamma_i)$. Let $u\in M$ be a primitive eigenvector of 
$\rho(\Gamma_i)$ corresponding to the eigenvalue $1$. By \eqref{eq:selfIPofu}, we have 
\begin{alignat}{2}
- (u,u) 
&= 4 \gcd(-2Nsac - N^2s^2c^2, -2(Ns - Nsa^2 - N^2s^2ac), -2Nsc^2)^{-2}(\Tr(\Gamma_i)^2 - 4)& \notag\\
&\geq 4 (2Nsc^2)^{-2}((2 + N^2s^2c^2)^2 - 4)& \notag\\
&= (Nsc^2)^{-2}(4 N^2s^2c^2 + N^4 s^4 c^4)& \notag\\
&\geq  N^2 s^2& \notag\\  
&> 4 D^2.& \label{eq:-(u,u)>4D^2}   
\end{alignat}
The inequality $\Tr(\Gamma_i) > 2$ also implies that 
$\gamma_i^*|_M$ has a real eigenvalue greater than $1$ by \cref{lem:chplofrho(A)}. 
Hence, by \cref{lem:loxodromicand(-2)}, if there exists a 
$\gamma_i$-invariant irreducible curve, it must be a $(-2)$-curve. 

Now suppose that $\gamma_i$ preserves a $(-2)$-curve $E\subset X$. 
Let $e \in \NS(X)$ be the class of $E$. 
Since $e$ is fixed by $\gamma_i^*$, it can be written as 
$e = \alpha u + v$, where $\alpha\in \bQ$ and $v\in \bQ M^\perp$. 
By \cref{lem:noeffinMperp}, we have $\alpha\neq 0$. Furthermore, 
by \cref{lem:detM2-multiple}, we obtain $D\alpha\in \bZ$ since $u$
is primitive in $L$. Then, we obtain  
\[ -2D^2 
= (D e,D e) 
= (D\alpha u, D\alpha u) + (Dv, Dv) 
\leq (D\alpha)^2(u,u) \leq (u,u),  \]
which contradicts \eqref{eq:-(u,u)>4D^2}. 
Therefore, $\gamma_i$ does not preserve any irreducible curve.  
\end{proof}

\section{Main results}\label{sec:mainresults}
This section gives the proofs of \cref{th:mainA,th:mainB}. 
Let $X\subset Z \coloneqq \bP^1_x\times \bP^1_y \times \bP^1_z$ 
be the surface defined by the tri-homogeneous polynomial 
\[\begin{split}
F(x_0, x_1, y_0, y_1, z_0, z_1) = Q_a(x_0, x_1, y_0, y_1) z_0^2
+ (x_0^2 y_0^2 + x_1^2 y_1^2)z_0 z_1 
+ Q_b(x_0, x_1, y_0, y_1) z_1^2, 
\end{split} 
\]
where $a,b \in \bC$, and
$Q_c(x_0, x_1, y_0, y_1) \coloneqq x_0^2y_1^2 + c x_0x_1y_0y_1 + x_1^2y_0^2$
for $c\in \bC$. The case $(a,b) = (0,1)$ corresponds to $X$ in \cref{th:mainA}. 

\begin{proposition}\label{prop:Xissmooth}
The surface $X$ is smooth if and only if
\begin{equation}\label{eq:Xissmooth}
(a^2-4)(b^2-4)(ab-2b-2a+3)(ab+2b+2a+3)(b^2-2ab+a^2+4) \neq 0.
\end{equation} 
\end{proposition}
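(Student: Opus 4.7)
The plan is to apply the Jacobian criterion on each of the eight affine charts of $Z$, organizing the computation via the double-cover structure of $X$. Write $F = A z_0^2 + B z_0 z_1 + C z_1^2$ with $A = Q_a$, $B = x_0^2 y_0^2 + x_1^2 y_1^2$, $C = Q_b$, and view $\pi_z : X \to \bP^1_x \times \bP^1_y$ as a double cover with branch curve $\Delta \coloneqq B^2 - 4AC = 0$. The symmetries help substantially: $(x_0, x_1, y_0, y_1) \mapsto (x_1, x_0, y_1, y_0)$ preserves $F$ and permutes the affine charts of $\bP^1_x \times \bP^1_y$, while $(z_0, z_1) \mapsto (z_1, z_0)$ swaps $a$ and $b$, matching the manifest $a \leftrightarrow b$ symmetry of \eqref{eq:Xissmooth}.

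The first step is to translate the Jacobian condition into a condition on $\Delta$. In the affine chart $z_0 = 1$ with $z = z_1$ and $f = A + Bz + Cz^2$, whenever $C \neq 0$ the equation $f_z = B + 2Cz = 0$ forces $z = -B/(2C)$; substituting gives $f = -\Delta/(4C)$, and on the locus $\Delta = 0$ one checks directly that $f_x = -\Delta_x/(4C)$ and $f_y = -\Delta_y/(4C)$. Hence singular points of $X$ over $\{C \neq 0\}$ correspond exactly to singular points of the curve $\{\Delta = 0\} \subset \bP^1_x \times \bP^1_y$; a symmetric argument in the chart $z_1 = 1$ covers $\{A \neq 0\}$. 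The remaining locus $\{A = C = 0\}$ needs separate attention: when $B \neq 0$ there, $f_z = B \neq 0$ implies smoothness, so only $A = B = C = 0$ can contribute. A short computation in affine coordinates $x = x_1/x_0$, $y = y_1/y_0$ shows $B = 0 \Rightarrow xy = \pm i$ and $A - C = (a-b)xy = 0 \Rightarrow a = b$; at such a point the $\pi_z$-fiber $\bP^1$ lies in $X$, and $X$ is singular there iff the two quadratics $f_x(z)$, $f_y(z)$ share a root. One verifies by direct calculation that the resultant $\mathrm{Res}_z(f_x, f_y)$ vanishes on this fiber only when additionally $a^2 = 4$, so this boundary contribution is absorbed by the factor $(a^2 - 4)(b^2 - 4)$.

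The main task is then to compute when the bi-degree $(4,4)$ curve $\Delta = 0$ has a singular point, as a polynomial condition in $(a,b)$. In each of the four affine charts of $\bP^1_x \times \bP^1_y$, one computes the resultant of $\Delta, \Delta_x, \Delta_y$ in $(x,y)$, obtaining a polynomial in $(a,b)$; the full smoothness condition is the product of these contributions (modulo common factors, which the symmetries control). Each factor in \eqref{eq:Xissmooth} admits a geometric interpretation confirming it produces a genuine singularity: $a = \pm 2$ (resp.~$b = \pm 2$) makes $Q_a$ (resp.~$Q_b$) equal to the perfect square $(x_0 y_1 \pm x_1 y_0)^2$, so that the divisor $\{A = 0\}$ (resp.~$\{C = 0\}$) is non-reduced and forces $\Delta$ to acquire a singularity along it; the factors $ab \pm 2(a+b) + 3 = 0$ encode tangency conditions among the conics $\{A = 0\}$, $\{B = 0\}$, $\{C = 0\}$ that produce nodes of $\Delta$; and $(a-b)^2 + 4 = 0$ arises from a more subtle tangency degeneration of the discriminant pencil.

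The main obstacle is the symbolic algebra — the resultant of $\Delta$ with its partials is a polynomial in $(a,b)$ of considerable total degree, and matching its factorization exactly to the five stated irreducible pieces is best handled with a computer algebra system. Conceptually, however, each direction of the equivalence can be verified independently: one checks that each of the five factors, when it vanishes, produces an explicit singular point of $X$ (by exhibiting the point), and one uses a degree count together with the symmetries to conclude that no additional factors appear.
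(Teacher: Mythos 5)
Your proposal takes essentially the same route as the paper: the paper's proof is simply an appeal to the Jacobian criterion with the decisive polynomial computation delegated to the computer algebra system Singular, and your plan likewise reduces in the end to a CAS-verified elimination. Your intermediate reduction to singularities of the discriminant curve $\Delta = B^2 - 4AC$ of $\pi_z$, together with the separate treatment of the locus $A = B = C = 0$, is a sound and helpful way to organize that computation, but it does not change the substance of the argument.
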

\begin{proof}
This proposition follows from the Jacobian criterion, in principle. 
Due to the complexity of the manual computations, we used the computer algebra system 
Singular \cite{Singular} to verify it.
\end{proof}

In what follows, we assume that $a$ and $b$ satisfy \eqref{eq:Xissmooth}, that is,  
$X$ is smooth. Let $p$ and $q\in Z$ be the points given by 
$(x_0:x_1, y_0:y_1, z_0:z_1) = (1:0, 1:0, 1:0)$ and 
$(x_0:x_1, y_0:y_1, z_0:z_1) = (1:0, 1:0, 0:1)$, respectively. 
These points lie on $X$. Because 
\[
F(x_0,x_1,1,0,1,0) = x_1^2, \quad
F(x_0,x_1,1,0,0,1) = x_1^2,
\]
the points $p$ and $q$ are fixed by $\iota_x$. 
Similarly, these two points are also fixed points of $\iota_y$. 
Furthermore, it follows from  
\[ F(1,0,1,0,z_0,z_1) = z_0 z_1 \]
that $\iota_z(p) = q$ and $\iota_z(q) = p$. 

The idea of the proof of \cref{th:mainA} is as follows. 
Let $\phi$ be a suitable power of $\iota_x\circ\iota_y$, and let  
$\psi = \iota_z\circ\iota_x\circ\iota_y\circ \iota_z$. 
These two automorphisms fix the point $p$. 
We will choose the parameters $a$ and $b$ so that $\phi$ and $\psi$ 
satisfy no nontrivial group-theoretic relation and the induced homomorphism of $\phi$ 
on $\widehat\cO_{p}$ is congruent to $\id$ modulo $\frakm_{p}^2$. 
Then, we apply \cref{prop:idmodm_n} to $\phi$ and $\psi$ to deduce \cref{th:mainA}. 

\begin{lemma}\label{lem:nocontraction}
If $a^2 -ab + b^2 \neq 0$, then neither $\pi_x$ nor $\pi_y$ contracts any curve. 
If $a\neq b$, then $\pi_z$ does not contract any curve. 
\end{lemma}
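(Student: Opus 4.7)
The plan is to use the following elementary characterization: since $\pi_x \colon X \to \bP^1_y\times \bP^1_z$ is the restriction of the projection $Z\to \bP^1_y\times \bP^1_z$, whose fibers are copies of $\bP^1_x$, a curve of $X$ is contracted by $\pi_x$ if and only if some fiber of this projection is entirely contained in $X$; equivalently, there exists $(y_0:y_1,z_0:z_1)$ at which $F$ vanishes identically as a polynomial in $(x_0, x_1)$. The same characterization applies to $\pi_y$ and $\pi_z$ with the roles of the factors permuted.

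For $\pi_x$, I would expand $F$ in powers of $x_0, x_1$ and read off its three coefficients: $y_1^2 z_0^2 + y_0^2 z_0 z_1 + y_1^2 z_1^2$, $\ y_0 y_1(a z_0^2 + b z_1^2)$, and $y_0^2 z_0^2 + y_1^2 z_0 z_1 + y_0^2 z_1^2$. Setting all three to zero, the boundary cases $y_0 = 0$, $y_1 = 0$, $z_0 = 0$, $z_1 = 0$ are eliminated by direct inspection. In the remaining case, dehomogenizing with $y = y_0/y_1$ and $z = z_0/z_1$, the middle coefficient forces $az^2 + b = 0$, while the first and third equations combine to give $y^2 = az/(b-a)$ and $y^2 z = (b-a)/a$. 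Eliminating $y^2$ and using $z^2 = -b/a$ collapses to exactly $a^2 - ab + b^2 = 0$. Under the hypothesis $a^2 - ab + b^2 \neq 0$, this is impossible, so $\pi_x$ contracts no curve. Because $Q_c(x_0,x_1,y_0,y_1) = Q_c(y_0,y_1,x_0,x_1)$ and $x_0^2 y_0^2 + x_1^2 y_1^2$ is symmetric in $x \leftrightarrow y$, the polynomial $F$ itself is invariant under interchanging the first two factors of $Z$, so the statement for $\pi_y$ follows from the one for $\pi_x$.

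For $\pi_z$, the analogous coefficients of $F$ in $(z_0, z_1)$ are $Q_a(x,y)$, $\ x_0^2 y_0^2 + x_1^2 y_1^2$, and $Q_b(x,y)$. Assuming $a \neq b$, the difference $Q_a - Q_b = (a-b)x_0 x_1 y_0 y_1$ must also vanish, so at least one of $x_0, x_1, y_0, y_1$ equals zero. A short case analysis shows that in each of the four cases, two of $x_0, x_1, y_0, y_1$ are forced to vanish (once via the coordinate being zero, and again via either $Q_a$ or $Q_b$ vanishing), and then the middle coefficient $x_0^2 y_0^2 + x_1^2 y_1^2$ is nonzero, a contradiction. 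The main obstacle is simply to be thorough with the boundary cases in both arguments; the core computation in each case is a small system of polynomial equations handled by direct substitution.
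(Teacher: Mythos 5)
Your proposal is correct and follows essentially the same route as the paper: both reduce contraction by $\pi_x$ to the simultaneous vanishing of the three coefficients of $F$ viewed as a quadratic form in $(x_0,x_1)$, dispose of the boundary cases, and collapse the remaining system to $a^2-ab+b^2=0$ (the paper dehomogenizes at $y_0=z_0=1$ rather than $y_1=z_1=1$, and argues by contrapositive, but these differences are immaterial; your explicit case analysis for $\pi_z$ is exactly the ``simpler'' argument the paper omits). The only detail to tidy up is that your divisions by $a$ and by $b-a$ need the trivial side remarks that $a=0$ forces $b=0$ via the middle coefficient (so $a^2-ab+b^2=0$ anyway), while $a=b\neq 0$ turns the first equation into $y^2z=0$, which is already excluded.
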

\begin{proof}
We only show that $\pi_x$ does not contract any curve if $a^2 -ab + b^2 \neq 0$, because 
the proof for $\pi_y$ is the same, and the one for $\pi_z$ is simpler. 
We prove the contrapositive. Suppose that $\pi_x$ contracts a curve. 
This means that there exist $(y_0:y_1)\in \bP_y$ and $(z_0:z_1)\in \bP_z$ such that 
all three coefficients of $F$, viewed as a quadratic form in $x_0$ and $x_1$, 
vanish simultaneously:  
\begin{alignat}{3}
&y_1^2 z_0^2 + y_0^2 z_0 z_1 + y_1^2 z_1^2 &&= 0  \label{eq:s1}\\
&y_0 y_1 (a z_0^2 + b z_1^2) &&= 0  \label{eq:s2}\\ 
&y_0^2 z_0^2 + y_1^2 z_0 z_1 + y_0^2 z_1^2 &&= 0.  \label{eq:s3}
\end{alignat}
We have $y_0 \neq 0$; otherwise, \eqref{eq:s1} and \eqref{eq:s3} would imply that 
$z_0 = 0$ and $z_1 = 0$. Similarly, we have $z_0 \neq 0$. 
Hence, we may assume that $y_0 = 1$ and $z_0 = 1$. 
Then, \eqref{eq:s1} and \eqref{eq:s3} imply that $y_1 \neq 0$ and 
$y_1^2 = z_1^3$. Hence 
\begin{equation*}
0 = 1 + y_1^2 z_1 + z_1^2 = 1 + z_1^2 + z_1^4.
\end{equation*} 
On the other hand, it follows from \eqref{eq:s2} that 
$a + b z_1^2 = 0$. Therefore  
\[ b^2 -ab + a^2 
= b^2 - (-b z_1^2)b  + (-b z_1^2)^2
= b^2(1 + z_1^2 + z_1^4) 
= 0. 
\]
This completes the proof. 
\end{proof}

We examine the behavior of $\iota_x \circ \iota_y$ at the point $p$. 
Put $U \coloneqq (x_0\neq 0)\cap(y_0\neq 0)\cap(z_0\neq 0) \subset Z$, and 
define $x \coloneqq x_1/x_0$, $y \coloneqq y_1/y_0$, and $z \coloneqq z_1/z_0$. 
Then $(U; x, y, z)$ is a chart around $p$, which corresponds to $(x, y, z) = (0,0,0)$. 
The defining equation $F$ of $X$ is written as 
\[ \begin{split}
f(x, y, z) &=  (y^2 + b xy + x^2) z^2 + (1 + x^2 y^2) z + (y^2 + a x y + x^2) \\
& = (z^2 + y^2 z + 1) x^2 + (b y z^2 + a y) x + (y^2 z^2 + z + y^2) \\
& = (z^2 + x^2 z + 1) y^2 + (b x z^2 + a x) y + (x^2 z^2 + z + x^2)
\end{split} \]
on $U$. By Vieta's formulas, we have
\begin{equation}\label{eq:iotaonU}
\iota_x(x,y,z) = \left(- x - \frac{(b z^2 + a)y}{z^2 + y^2 z + 1}, y, z \right), \quad
\iota_y(x, y, z) = \left(x, -y - \frac{(b z^2 + a)x}{z^2 + x^2 z + 1}, z \right) 
     \end{equation}
on (the intersection of $X$ and) an open subset of $U$. 

Since $\frac{\partial f}{\partial z}(0,0,0) = 1 \neq 0$, the implicit function theorem 
implies that $(x,y)$ defines local coordinates of $X$ around $p$. 
We identify $\widehat\cO_p$ with $\bC[[x,y]]$. Then  
$x,y$ form a basis of $\frakm_p/\frakm_p^2$. Note that 
$\iota_x$ and $\iota_y$ induce linear transformations on $\frakm_p/\frakm_p^2$ since 
they are automorphisms fixing the point $p$.  

\begin{lemma}\label{lem:actiononm_p/m_p^2}
The actions $(\iota_x)^*_p$ and $(\iota_y)^*_p$ on $\frakm_p/\frakm_p^2$
are represented, with respect to the basis $x, y$,  by the matrices 
\[ \begin{pmatrix}
-1 & 0 \\
-a & 1
\end{pmatrix} \quad\text{and}\quad 
\begin{pmatrix}
1 & -a \\
0 & -1
\end{pmatrix}, 
\]
respectively. 
\end{lemma}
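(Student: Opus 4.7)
The plan is a direct computation once we understand the local behavior of the implicit coordinate $z$ at $p$. The key preliminary observation is that although $(x, y, z)$ are the affine coordinates on $U$, the local chart on $X$ at $p$ uses only $(x, y)$, and we must express $z$ as a power series in $(x, y)$ before substituting into the formulas \eqref{eq:iotaonU}.

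First, I would analyze the defining equation
\[ f(x, y, z) = z + (x^2 + a xy + y^2) + \text{(terms of total degree} \ge 3\text{ in } x, y, z), \]
obtained by Taylor expansion at $(0,0,0)$. Since $\partial f/\partial z(0,0,0) = 1$, the implicit function theorem gives $z$ as a power series in $(x, y)$, and comparing lowest-order terms yields
\[ z = -(x^2 + axy + y^2) + \text{higher order terms}. \]
Consequently $z \in \frakm_p^2$, and therefore $z^2 \in \frakm_p^4$ and $y^2 z,\ x^2 z \in \frakm_p^4$.

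Next, I would substitute into the formulas \eqref{eq:iotaonU} and reduce modulo $\frakm_p^2$. For $\iota_x$, the second coordinate is preserved exactly, so $\iota_x^*(y) = y$. For the first coordinate, the numerator satisfies $(b z^2 + a) y \equiv a y \bmod \frakm_p^5$ and the denominator satisfies $z^2 + y^2 z + 1 \equiv 1 \bmod \frakm_p^4$, so
\[ \iota_x^*(x) \equiv -x - ay \pmod{\frakm_p^2}. \]
An entirely symmetric computation for $\iota_y$ gives $\iota_y^*(x) = x$ and $\iota_y^*(y) \equiv -y - ax \bmod \frakm_p^2$. Reading off the matrices with respect to the basis $x, y$ of $\frakm_p/\frakm_p^2$ then yields exactly the two matrices in the statement.

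There is no real obstacle beyond the bookkeeping: the only substantive point is verifying $z \in \frakm_p^2$ from the structure of $f$, after which everything reduces to expanding \eqref{eq:iotaonU} up to first order and discarding terms containing $z$. I would present the argument in that order: (a) note $z \in \frakm_p^2$ via the shape of $f$; (b) substitute into \eqref{eq:iotaonU} and simplify modulo $\frakm_p^2$; (c) extract the matrix representations.
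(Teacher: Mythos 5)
Your proposal is correct and follows essentially the same route as the paper: substitute the implicit expression for $z$ into \eqref{eq:iotaonU} and reduce modulo $\frakm_p^2$. The only difference is that the paper gets by with the weaker observation $z(0,0)=0$ (i.e., $z\in\frakm_p$), which already forces the term $\frac{(bz^2+a)y}{z^2+y^2z+1}$ to be $\equiv ay \bmod \frakm_p^2$, whereas you establish the sharper (and also correct) fact $z\in\frakm_p^2$.
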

\begin{proof}
As a function in $(x, y)$, we have $z(0,0) = 0$. Hence, by \eqref{eq:iotaonU}, 
we obtain $(\iota_x)_p^*x = -x - a y$, $(\iota_x)_p^*y = y$, 
$(\iota_y)_p^*x = x$, and $(\iota_y)_p^*y = -ax -y$ modulo $\frakm_p^2$. 
This proves the claim.
\end{proof}

\begin{proposition}\label{prop:finiteorderlinearpart_phi}
If $a^2 = 0$, $1$, $2$, or $3$ then the order of 
$(\iota_x\circ \iota_y)^*_p: \frakm_p/\frakm_p^2 \to \frakm_p/\frakm_p^2$ is $2$, $3$, $4$, 
or $6$, respectively. These are the only values of $a$ for which 
$(\iota_x\circ \iota_y)^*_p:\frakm_p/\frakm_p^2 \to \frakm_p/\frakm_p^2$ is of finite order. 
\end{proposition}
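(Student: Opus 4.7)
The plan is to reduce the question to an explicit matrix calculation in $\GL_2(\bC)$ and then classify when its eigenvalues are roots of unity. Since pullback is contravariant, $(\iota_x\circ\iota_y)^*_p = (\iota_y)^*_p\circ(\iota_x)^*_p$, so multiplying the two matrices from \cref{lem:actiononm_p/m_p^2} in the appropriate order yields, on the basis $x,y$ of $\frakm_p/\frakm_p^2$, the matrix
\[
C = \begin{pmatrix} 1 & -a \\ 0 & -1 \end{pmatrix}\begin{pmatrix} -1 & 0 \\ -a & 1 \end{pmatrix} = \begin{pmatrix} a^2-1 & -a \\ a & -1 \end{pmatrix}.
\]
A direct computation gives $\det C = 1$ and $\Tr C = a^2-2$, so the characteristic polynomial of $C$ is $t^2-(a^2-2)\,t+1$, and the eigenvalues $\lambda,\lambda^{-1}$ of $C$ satisfy $\lambda+\lambda^{-1}=a^2-2$.

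For the forward direction, I simply substitute. When $a^2=0$ the matrix $C$ equals $-I_2$, which has order $2$. For $a^2=1,2,3$ the characteristic polynomial factorizes as the cyclotomic polynomials $\Phi_3(t)=t^2+t+1$, $\Phi_4(t)=t^2+1$, and $\Phi_6(t)=t^2-t+1$, whose roots are primitive third, fourth, and sixth roots of unity, respectively; since the eigenvalues are distinct in each case, $C$ is diagonalizable and has exact order $3,4,6$.

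For the converse, suppose $C$ has finite order $n$. Then $C$ is diagonalizable, both eigenvalues are $n$-th roots of unity, and $a^2-2 = \zeta+\zeta^{-1} = 2\cos(2\pi k/n)$ for some $k$ coprime to $n$. The main obstacle is pinning down the possible values of $a^2$: I would invoke Niven's theorem, which asserts that the only rational values attained by $2\cos(2\pi k/n)$ are $-2,-1,0,1,2$, thereby forcing $a^2-2\in\{-2,-1,0,1,2\}$, i.e., $a^2\in\{0,1,2,3,4\}$. The remaining case $a^2=4$ is excluded by the smoothness criterion \eqref{eq:Xissmooth} of \cref{prop:Xissmooth}, leaving exactly the four stated values.
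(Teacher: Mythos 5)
Your matrix computation, the characteristic polynomial $t^2-(a^2-2)t+1$, and the forward direction (including the diagonalizability check that pins down the exact orders $2,3,4,6$) coincide with the paper's argument and are correct; if anything you are more explicit than the paper, which dismisses the forward direction as ``direct computations.''

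The converse, however, has a genuine gap: Niven's theorem classifies the \emph{rational} values of $2\cos(2\pi k/n)$, so to conclude $a^2-2\in\{-2,-1,0,1,2\}$ you must first know that $a^2-2$ is rational --- and nothing forces this, since $a$ ranges over all of $\bC$ subject only to the smoothness condition \eqref{eq:Xissmooth}. Indeed, taking $a^2-2=2\cos(2\pi/5)=(\sqrt5-1)/2$, i.e.\ $a^2=(3+\sqrt5)/2$, the matrix $C$ is non-scalar with distinct eigenvalues $e^{\pm 2\pi i/5}$, hence diagonalizable of exact order $5$, while \eqref{eq:Xissmooth} still holds for generic $b$; so the ``only values'' claim fails for irrational $a^2$ as the proposition is literally stated. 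You should be aware that the paper's own proof of the converse has exactly the same defect: it asserts that finite order forces the characteristic polynomial to be a product of cyclotomic polynomials, which already presupposes that its coefficients are rational. The forward implication is the only part used downstream (only $a=0$ enters the proofs of \cref{th:mainA,th:mainB}), so the gap does not affect the main results, but to make the converse correct you would need to either add the hypothesis that $a^2$ is rational or weaken the conclusion accordingly.
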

\begin{proof}
The linear transformation
$(\iota_x\circ \iota_y)^*_p = (\iota_y)^*_p\circ (\iota_x)^*_p$ can be represented by 
\[ \begin{pmatrix}
-1 + a^2 & -a \\
a & -1
\end{pmatrix} \]
by \cref{lem:actiononm_p/m_p^2}. The first statement follows by direct computations.  
If $(\iota_x\circ \iota_y)^*_p$ has finite order, then its characteristic polynomial
$t^2 - (a^2 - 2) t + 1$ is a product of cyclotomic polynomials. This implies that 
$a^2 = 0$, $1$, $2$, $3$ or $4$. However, $a^2 \neq 4$ by the assumption \eqref{eq:Xissmooth}. 
This completes the proof.
\end{proof}

We are now ready to prove the main theorems. 

\begin{proof}[Proof of \cref{th:mainA}]
We are in the case $(a,b) = (0,1)$. By \cref{lem:nocontraction}, 
the double covers $\pi_x$, $\pi_y$, and $\pi_z$ do not contract any curve. 
Thus, by \cref{prop:no_nontrivial_relation}, $\iota_x\circ \iota_y$ and 
$\iota_z \circ \iota_x \circ \iota_y \circ \iota_z$ satisfy no nontrivial 
group-theoretic relation. 

Let $M\subset \NS(X)$ be the sublattice of rank $3$ defined as in \S\ref{ss:K3inP1P1P1}, 
and put $D = |\det(M^\perp)|$. Set 
$\phi = (\iota_x\circ \iota_y)^{2D+2}$ and
$\psi = \iota_z \circ \iota_x\circ \iota_y \circ \iota_z$. 
It follows from \cref{prop:finiteorderlinearpart_phi}
that $\phi^*_p \equiv \id \bmod \frakm_p^2$. 
We recursively define automorphisms $\gamma_i$ of $X$ by 
$\gamma_0 \coloneqq \psi$ and $\gamma_{i} \coloneqq [\phi, \gamma_{i-1}]$ ($i\geq 1$).  
By \cref{prop:nofixedcurve}, for each $i\geq 1$, no irreducible curve is preserved 
by $\gamma_i$, and in particular, the point $p$ is an isolated fixed point of $\gamma_i$.  

Now, let $n\geq 1$ be any integer.  
By applying \cref{prop:idmodm_n}, we have $(\gamma_{n})^*_p \equiv \id \bmod \frakm_p^n$ 
(in fact, $(\gamma_{n})^*_p \equiv \id \bmod \frakm_p^{n+1}$).  
Therefore, $\gamma_n$ is the desired automorphism, and the proof is complete. 
\end{proof}

\begin{remark}\label{rmk:parameter}
The parameters $a$ and $b$ can be chosen differently. In fact, a similar argument 
applies as long as $a$ and $b$ satisfy the following conditions: 
\begin{enumerate}
\item Condition \eqref{eq:Xissmooth} holds so that $X$ is smooth; 
\item $a^2 - ab + b^2 \neq 0$ and $a\neq b$ so that none of $\pi_x$, $\pi_y$, and 
$\pi_z$ contracts any curve; 
\item $a^2 \in \{ 0,1,2,3 \}$ so that a suitable power of 
$(\iota_x\circ \iota_y)^*_p$ is congruent to $\id$ modulo $\frakm_p^2$.
\end{enumerate}
For example, when $a=0$, the same proof goes through as long as   
\[ b\in \bC\setminus\{ -2,-3/2,0,3/2,2, -2\sqrt{-1}, 2\sqrt{-1} \}.\]
\end{remark}

\begin{proof}[Proof of \cref{th:mainB}]
Let $n\geq1$ be an integer, and let $\gamma$ be the automorphism described in \cref{th:mainA}. 
Then, $C' \coloneqq \gamma^{-1}(C)$ is isomorphic but not equal to $C$ since $\gamma$ does not 
preserve any irreducible curve. 

Let $(x,y)$ be a local coordinate system around $p$. 
We may assume that $C$ is defined by the equation $y=0$ on this coordinate system
since $C$ is nonsingular. Write $\gamma(x,y) = (\gamma_1(x,y), \gamma_2(x,y))$ on this  
coordinate system, where 
$\gamma_{1}, \gamma_{2}\in\cO_p$. Then $\gamma_{2}$ can be written as 
$\gamma_{2}(x, y) = y + \gamma_{2}'(x,y)$
for some $\gamma_2\in \frakm_p^n$ since $\gamma^*_p \equiv \id \bmod \frakm_p^n$.  
Hence we have 
\[ \begin{split}
\mu_p(C, C') 
&= \dim \bC[[x,y]]/(y, \gamma^*_p y) \\
&= \dim \bC[[x,y]]/(y, \gamma_{2}(x,y)) \\
&= \dim \bC[[x]]/(\gamma_{2}(x,0)) \\
&= \dim \bC[[x]]/(\gamma_{2}'(x,0)) \\
&\geq n. 
\end{split}
\]
The proof is complete.
\end{proof}

\bibliographystyle{alpha}
\bibliography{refs} 
\end{document}